\numberwithin{equation}{section}
\theoremstyle{plain}
\newtheorem{theorem}{Theorem}[section]
\newtheorem{proposition}[theorem]{Proposition}
\newtheorem{lemma}[theorem]{Lemma}
\newtheorem{claim}[theorem]{Claim}
\theoremstyle{remark}
\newtheorem{remark}[theorem]{Remark}
\newtheorem{conjecture}[theorem]{Conjecutre}
\theoremstyle{definition}
\newtheorem{definition}[theorem]{Definition}
\newcommand{\eps}{\varepsilon}
\begin{document}

\title[Wulff  inequality for minimal submanifolds in Euclidean space]{Wulff  inequality for minimal submanifolds in Euclidean space}

\author{Wenkui Du}
\address{Department of Mathematics, Massachusetts Institute of Technology}
\email{\tt duwenkui@mit.edu}
\author{Yuchao Yi}
\address{Department of Mathematics, University of California San Diego}
\email{\tt yuyi@ucsd.edu}
\author{Ziyi Zhao}
\address{Institute for Theoretical Sciences, Westlake University }
\email{\tt zhaoziyi@westlake.edu.cn}

\begin{abstract}
In this paper, we prove a  Wulff inequality for $n$-dimensional minimal submanifolds with boundary in $\mathbb{R}^{n+m}$, where we associate a nonnegative anisotropic weight $\Phi: S^{n+m-1}\to \mathbb{R}^{+}$ to the boundary of minimal submanifolds. The Wulff inequality constant depends only on $m$ and $n$, and is independent of the weights. The inequality is sharp if $m=1, 2$ and $\Phi$ is the support function of ellipsoids or certain type of centrally symmetric long convex bodies.
\end{abstract}
\maketitle

\section{Introduction}
The isoperimetric inequality is a fundamental geometric inequality that has been extensively studied throughout history since the ancient era of Queen Dido. It states that the round ball minimizes the boundary hypersurface area among all shapes with a given volume. There are several proofs of the isoperimetric inequality and readers can refer to the books and papers \cite{burago2013geometric, osserman1978isoperimetric, chavel2001isoperimetric}. Later, the isoperimetric inequality for general minimal submanifolds with nonzero codimension in Euclidean space was considered. It was conjectured in \cite{alexander1974area} that
\begin{conjecture}
    For any $n$ dimensional smooth minimal submanifold $\Sigma\subset \mathbb{R}^{n+m}$ with smooth boundary $\partial\Sigma$, the following sharp isoperimetric inequality holds
\begin{align}
    |\partial \Sigma|\geq n|B^n|^{\frac{1}{n}}|\Sigma|^{\frac{n-1}{n}},
\end{align}
and equality holds if and only if $\Sigma$ is an $n$-dimensional ball in $\mathbb{R}^n$. Here, $|\Sigma|$ denotes the $n$-dimensional volume (area) of $\Sigma$, $|\partial \Sigma|$ denotes the $(n-1)$-dimensional volume (area) of the boundary $\partial \Sigma$, $B^n$ is the open unit ball in $\mathbb{R}^n$, and $|B^n|$ denotes its volume.
\end{conjecture}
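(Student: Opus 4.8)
The plan is to use the Alexandrov--Bakelman--Pucci (ABP) / transport-map strategy, which is the only route that currently reaches the sharp constant; it does so for $m=1,2$ (for $n=2$ the sharp inequality in all codimensions is classical), and for $m\ge 3$ the same scheme yields only a constant depending on $m$ and $n$, so the proposal is a proof in the low-codimension range together with an indication of where the general case breaks. The first step turns the geometric statement into a linear PDE on $\Sigma$: solve the Neumann problem
\begin{equation}
\Delta_\Sigma u=\frac{|\partial\Sigma|}{|\Sigma|}\ \text{ in }\Sigma,\qquad \frac{\partial u}{\partial\nu}=1\ \text{ on }\partial\Sigma,
\end{equation}
where $\nu$ is the outward unit conormal; the compatibility condition $\int_\Sigma\Delta_\Sigma u=\int_{\partial\Sigma}\partial u/\partial\nu$ holds by construction, and elliptic theory produces a solution $u\in C^{2,\alpha}$ (after a standard approximation to handle regularity at $\partial\Sigma$ if desired).

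The heart of the argument is a contact/transport construction. Let $A$ be the set of pairs $(x,y)$ with $x$ in the interior of $\Sigma$ and $y\in T_x^\perp\Sigma$ such that $|\nabla^\Sigma u(x)|^2+|y|^2\le1$ and the symmetric bilinear form $\nabla^2_\Sigma u(x)-\langle\mathrm{II}_x(\cdot,\cdot),y\rangle$ is positive semidefinite, where $\mathrm{II}$ is the vector-valued second fundamental form and the sign is fixed by the Weingarten convention; define $F\colon A\to\mathbb{R}^{n+m}$ by $F(x,y)=\nabla^\Sigma u(x)+y$, regarding $x\in\Sigma$ as its position vector in $\mathbb{R}^{n+m}$. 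The key claim is that $F(A)$ contains the open unit ball. Given $\xi$ with $|\xi|<1$, minimise the function $x\mapsto u(x)-\langle x,\xi\rangle$ over the compact manifold $\Sigma$; the Neumann condition rules out a boundary minimum, since on $\partial\Sigma$ the outward derivative of this function equals $1-\langle\nu,\xi\rangle\ge1-|\xi|>0$. At an interior minimiser $x_0$ the first-order condition gives $\nabla^\Sigma u(x_0)$ equal to the tangential projection of $\xi$, so $y_0:=\xi-\nabla^\Sigma u(x_0)\in T_{x_0}^\perp\Sigma$ with $|\nabla^\Sigma u(x_0)|^2+|y_0|^2=|\xi|^2<1$, and the second-order condition is exactly the semidefiniteness that puts $(x_0,y_0)$ into $A$; thus $\xi=F(x_0,y_0)$.

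Now invoke the area formula: $|B^{n+m}|\le\int_A|\det DF|$. In the splitting $T_x\Sigma\oplus T_x^\perp\Sigma$ the Weingarten relations make $DF$ block upper-triangular, with diagonal blocks $\nabla^2_\Sigma u-\langle\mathrm{II},y\rangle$ on $T_x\Sigma$ and the identity on the fibre, so $|\det DF|=\det\!\big(\nabla^2_\Sigma u-\langle\mathrm{II},y\rangle\big)$; this is nonnegative on $A$, and the AM--GM inequality together with \emph{minimality} $\operatorname{tr}_\Sigma\mathrm{II}=\vec H=0$ gives
\begin{equation}
\det\!\big(\nabla^2_\Sigma u-\langle\mathrm{II},y\rangle\big)\le\Big(\tfrac1n\operatorname{tr}\big(\nabla^2_\Sigma u-\langle\mathrm{II},y\rangle\big)\Big)^n=\Big(\tfrac1n\Delta_\Sigma u\Big)^n=\Big(\tfrac{|\partial\Sigma|}{n|\Sigma|}\Big)^n .
\end{equation}
Integrating first over each fibre $\{y:(x,y)\in A\}\subset\{|y|^2\le1-|\nabla^\Sigma u(x)|^2\}$ and then over $\Sigma$ yields the desired inequality up to the constant coming from the fibre integral. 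For the equality case, equality forces $\nabla^2_\Sigma u-\langle\mathrm{II},y\rangle$ to be proportional to the identity throughout $A$; differentiating in $y$ and using $\vec H=0$ forces $\mathrm{II}\equiv0$, so $\Sigma$ is totally geodesic, and then $\nabla^2_\Sigma u=\tfrac1n(\Delta_\Sigma u)\,\mathrm{Id}$ together with the boundary data forces $\Sigma$ to be a flat $n$-dimensional ball.

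The main obstacle is the fibre integral when $m\ge3$: bounding the fibre crudely by a Euclidean $m$-ball loses a factor (the Euclidean volumes $|B^{n+m}|$, $|B^n|$, $|B^m|$ do not combine to give the sharp constant), so one must instead evaluate the fibre integral \emph{against the determinant weight} $\det(\nabla^2_\Sigma u-\langle\mathrm{II},y\rangle)$, which Brendle's computation in codimension $\le2$ carries out and which I expect to be the hardest step to push further. For the anisotropic Wulff statement of this paper one runs the same scheme with the target unit ball replaced by the Wulff shape of the weight $\Phi$ and $|\partial\Sigma|$ replaced by its $\Phi$-weighted version, estimating $|\det DF|$ against the support function of the Wulff shape; this again produces a constant depending only on $m$ and $n$.
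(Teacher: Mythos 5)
The framework you lay out (Neumann Poisson equation, contact set, surjectivity of the transport map onto the target ball, Jacobian factorisation, AM--GM with $\vec H=0$) is exactly the paper's Section~3 specialised to $\Phi(\cdot)=|\cdot|$, $\mathcal{W}=B^{n+m}$, so up to the point ``Integrating first over each fibre\dots'' the argument is sound and coincides with Lemmas~\ref{surjective} and~\ref{det-T}. The gap is in the next step, and it is worse than you indicate. Bounding the fibre measure crudely by $|B^m|$ gives $|B^{n+m}|\le \big(\tfrac{|\partial\Sigma|}{n|\Sigma|}\big)^n|B^m||\Sigma|$, i.e.\ $|\partial\Sigma|\ge n\big(\tfrac{|B^{n+m}|}{|B^m|}\big)^{1/n}|\Sigma|^{(n-1)/n}$, and one has $\tfrac{|B^{n+m}|}{|B^m|}<|B^n|$ strictly for \emph{every} $m\ge1$ (e.g.\ $n=2,m=1$: $\tfrac{4\pi/3}{2}=\tfrac{2\pi}{3}<\pi$). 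So the crude fibre bound does not reach the sharp constant even in codimension one or two; it is not an issue confined to $m\ge3$ as you suggest. Your proposed fix, ``evaluate the fibre integral against the determinant weight,'' is also not what closes the gap and is not even well-posed as an upper bound, since the determinant is exactly what one is trying to control.

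What actually closes the gap --- and this is the key idea you are missing, which is the whole content of the paper's Theorem~\ref{aniso-peri} together with Propositions~\ref{codim-1} and~\ref{codim-2} --- is to insert a \emph{density function} $f\ge0$ supported in the target ball into the area-formula inequality, writing
\begin{equation}
\int_{B^{n+m}} f \le \Big(\tfrac{|\partial\Sigma|}{n|\Sigma|}\Big)^n\int_\Sigma\Big(\int_{N_x\Sigma}f(\nabla^\Sigma u(x)+y)\,dy\Big)dx,
\end{equation}
and choosing $f$ so that its integral over every $m$-dimensional affine slice of the ball is at most $1$ (this makes the inner integral $\le1$) while $\int_{B^{n+m}}f$ is as large as possible. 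For $m=1$ one takes $f(\xi)=\tfrac{1}{\pi\sqrt{1-|\xi|^2}}$, whose integral along any chord is exactly $1$ and whose total integral is $|B^n|$ by Fubini (the paper's equation~\eqref{eq: codim1 ellipsoid} with $\lambda_i\equiv1$). For $m=2$ one takes normalised indicators of thin spherical shells $U_\sigma=B^{n+2}\setminus B^{n+2}(\sigma)$ and lets $\sigma\to1$, using that the intersection of $U_\sigma$ with any $2$-plane has area at most $\pi(1-\sigma^2)$ while $|U_\sigma|/\big(\pi(1-\sigma^2)\big)\to|B^n|$ (the paper's equation~\eqref{eq: codim2 ellipsoid}); this limit is where the factor $m\ge3$ degrades to $\tfrac{(n+m)|B^{n+m}|}{m|B^m||B^n|}<1$. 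Without this construction your argument proves a non-sharp inequality in all codimensions and does not establish the conjecture even for $m=1,2$; relatedly, your equality discussion for $m=2$ also needs more care, since in the limit the density concentrates on the boundary sphere and one must argue as in the paper's Claim~\ref{ellip-partial} that the contact condition is rigid there.
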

Carleman \cite{Carleman1921} proved that the conjecture holds for $2$-dimensional minimal surfaces  diffeomorphic to a disk  in $\mathbb{R}^3$ via Wirtinger inequality and Almgren \cite{almgren1986optimal} showed that the conjecture holds for arbitrary codimensional area minimizing submanifolds in Euclidean space. In a recent breakthrough, Brendle \cite{Brendle2021} verified the sharp isoperimetric inequality conjecture for minimal submanifolds in the cases of codimension one and two, adapting ideas from optimal mass transportation \cite{McCannGuillen2013} and ABP method of Cabre \cite{cabre2008elliptic}.

A natural generalization of the isoperimetric inequality considered by Wulff \cite{wul1901frage} is the isoperimetric problem with weighted boundary density.
\begin{definition}
    Let $\Phi$ be a positively one-homogeneous convex function in $\mathbb{R}^n$, and we define the $\Phi$-anisotropic perimeter
    \begin{equation}
        P_{\Phi}(\Omega)=\int_{\partial \Omega} \Phi(\nu(x))dS.
    \end{equation}
    The set
    \begin{equation}
        \mathcal{W} = \{x\in\mathbb{R}^n: \forall \nu \in S^{n-1},  x\cdot\nu \leq \Phi(\nu) \}
    \end{equation}
     is called the corresponding Wulff shape. The support function of $\mathcal{W}$ is as follows:\footnote{It is well-known  that support function and the Wulff shape are mutually determined and the Wulff shape induced by the support function of convex body is precisely the original convex body \cite{taylor1978crystalline}.}
    \begin{equation}
        \Psi(y) = \sup\{x\cdot y: x\in \mathcal{W}\}.
    \end{equation}
\end{definition}

Wulff considered the following question: Given a positive function $\Phi$, what shapes minimize the 
$\Phi$-anisotropic perimeter among the sets of finite perimeter  $\Omega\subset \mathbb{R}^n$ with fixed volume. Wulff
conjectured that the corresponding minimizers are homothetic to the Wulff shape $\mathcal{W}_{\Phi}$ generated by $\Phi$ and the isoperimetric inequality can be extended to the following celebrated Wulff inequality.
\begin{theorem}[Wulff Theorem {\cite{wul1901frage}}]\label{wulff-inequ}
    Let $\mathcal{W}$ be an $n$-dimensional centrally symmetric convex body in $\mathbb{R}^{n}$, and $\Phi$ be the corresponding support function. Then for any set $\Omega \subset \mathbb{R}^n$ of finite perimeter with $|\Omega| < \infty$, we have 
    \begin{equation}
        \frac{P_{\Phi}({\Omega})}{|\Omega|^{\frac{n-1}{n}}} \geq \frac{P_{\Phi}({\mathcal{W}})}{|\mathcal{W}|^{\frac{n-1}{n}}}.
    \end{equation}
    Moreover, equality holds if and only if $\Omega$ is homothetic to $\mathcal{W}$, namely $\Omega = a\mathcal{W}+b$ for some $a>0$ and $b\in \mathbb{R}^n$ up to a set of measure zero.
\end{theorem}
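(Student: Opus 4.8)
The plan is to derive the Wulff inequality by the optimal transportation method, in the spirit of Gromov's proof of the isoperimetric inequality and its anisotropic refinements (see \cite{McCannGuillen2013} and the references therein); this keeps the argument within the same circle of ideas used later for minimal submanifolds. One may assume $0<|\Omega|<\infty$ and $P_\Phi(\Omega)<\infty$ (so $\Omega$ has finite perimeter), and, by approximating $\Omega$ in $L^1$ by bounded smooth open sets $\Omega_k$ with $|\Omega_k|\to|\Omega|$ and $P_\Phi(\Omega_k)\to P_\Phi(\Omega)$ --- the latter being a standard consequence of lower semicontinuity of the convex-integrand functional $P_\Phi$ under $L^1$-convergence combined with a mollification argument --- one reduces the inequality to the case of a bounded smooth open $\Omega$. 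The starting point is the Brenier map: let $T=\nabla\varphi$, with $\varphi$ convex, be the optimal transport map pushing $\tfrac{1}{|\Omega|}\mathbf{1}_{\Omega}\,dx$ forward to $\tfrac{1}{|\mathcal W|}\mathbf{1}_{\mathcal W}\,dx$. Then $T(\Omega)\subseteq\overline{\mathcal W}$ and, by the Monge--Amp\`ere equation, $\det D^2\varphi(x)=|\mathcal W|/|\Omega|$ for almost every $x\in\Omega$.

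The estimate reduces to two elementary observations. First, since $T(x)\in\overline{\mathcal W}$ for every $x$, the very definition of the Wulff shape gives $T(x)\cdot\nu\le\Phi(\nu)$ for every unit vector $\nu$, and in particular $T(x)\cdot\nu(x)\le\Phi(\nu(x))$ for $x\in\partial\Omega$ with outer unit normal $\nu(x)$. Second, by Alexandrov's theorem $D^2\varphi(x)$ is symmetric and positive semidefinite for almost every $x$, so the arithmetic--geometric mean inequality gives $\operatorname{div}T(x)=\operatorname{tr}D^2\varphi(x)\ge n\big(\det D^2\varphi(x)\big)^{1/n}=n\,(|\mathcal W|/|\Omega|)^{1/n}$. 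Integrating over $\Omega$ and using the divergence theorem,
\begin{equation*}
    n\,|\mathcal W|^{1/n}\,|\Omega|^{\frac{n-1}{n}}\;\le\;\int_{\Omega}\operatorname{div}T\,dx\;=\;\int_{\partial\Omega}T\cdot\nu\,d\mathcal{H}^{n-1}\;\le\;\int_{\partial\Omega}\Phi(\nu)\,d\mathcal{H}^{n-1}\;=\;P_{\Phi}(\Omega).
\end{equation*}
On the other hand, at $x\in\partial\mathcal W$ the supporting hyperplane property gives $x\cdot\nu(x)=\sup_{y\in\mathcal W}y\cdot\nu(x)=\Phi(\nu(x))$, so the divergence theorem gives $P_\Phi(\mathcal W)=\int_{\partial\mathcal W}x\cdot\nu\,d\mathcal{H}^{n-1}=n\,|\mathcal W|$. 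Dividing, $P_\Phi(\Omega)/|\Omega|^{(n-1)/n}\ge n\,|\mathcal W|^{1/n}=P_\Phi(\mathcal W)/|\mathcal W|^{(n-1)/n}$, which is the asserted inequality.

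For the rigidity statement, suppose equality holds. Then the arithmetic--geometric inequality must be an equality almost everywhere, which forces all eigenvalues of $D^2\varphi$ to coincide and hence (using $\det D^2\varphi=|\mathcal W|/|\Omega|$) to equal the constant $a:=(|\mathcal W|/|\Omega|)^{1/n}$; moreover equality in the divergence-theorem step forces the measure $D^2\varphi$ to have no singular part. Since $\Omega$ may be taken connected and $\varphi$ is absolutely continuous along lines, $D^2\varphi\equiv aI$ integrates to $T(x)=ax+b$, an affine dilation. As $T$ pushes $\tfrac{1}{|\Omega|}\mathbf{1}_\Omega\,dx$ onto $\tfrac{1}{|\mathcal W|}\mathbf{1}_{\mathcal W}\,dx$ and is affine, $a\Omega+b=\mathcal W$ up to a Lebesgue-null set, i.e.\ $\Omega=a^{-1}\mathcal W+(-a^{-1}b)$ is homothetic to $\mathcal W$; conversely, homothets give equality by positive $1$-homogeneity of $\Phi$ and the computation $P_\Phi(\mathcal W)=n|\mathcal W|$.

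The displayed chain of inequalities is the easy part --- geometrically it just says ``transport $\Omega$ onto $\mathcal W$; the image lies in $\mathcal W$, so its support function $\Phi$ controls the boundary term, while AM--GM converts the volume-normalized Jacobian into $\operatorname{tr}D^2\varphi=\operatorname{div}T$''. I expect the real work to be in two technical points. (a)~The Brenier map is a priori only the gradient of a convex function, hence merely $BV_{\mathrm{loc}}$, so the identity $\int_\Omega\operatorname{div}T\,dx=\int_{\partial\Omega}T\cdot\nu\,d\mathcal{H}^{n-1}$ and the pointwise bound involving $\det D^2\varphi$ need justification; the cleanest route is to carry out the argument on the smooth approximating domains $\Omega_k$ and pass to the limit, or to invoke the careful analysis of the distributional divergence of monotone maps from the quantitative-isoperimetric literature. (b)~Upgrading the almost-everywhere identity $D^2\varphi=aI$ to the global statement that $T$ is affine on $\Omega$ requires the measure-theoretic connectedness of $\Omega$ together with an absolute-continuity argument, after which the measure-preservation property must be fed back in to pin $\Omega$ down exactly; keeping track of the equality case through the initial approximation step likewise requires the sharp forms of the approximation results. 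These are the only places where genuine care is needed.
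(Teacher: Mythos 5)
The paper does not prove Theorem \ref{wulff-inequ}; it is stated as a classical result of Wulff, with the reader referred to Taylor's articles and to Cabr\'e--Ros-Oton--Serra's ABP proof. So there is no internal proof to check you against. Your proposal gives a correct outline of a different, but equally standard, proof: the optimal-transport (Brenier/McCann/Gromov) argument. In the ABP route, one solves the Neumann problem $\Delta u = P_\Phi(\Omega)/|\Omega|$ in $\Omega$, $\partial_\nu u = \Phi(\nu)$ on $\partial\Omega$, shows that $\nabla u$ covers $\mathcal W^{\mathrm{int}}$ on the lower contact set $\{D^2u\ge 0\}$ by a touching argument, and then applies AM--GM to $\det D^2u$. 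Your route replaces the PDE by the Brenier map $T=\nabla\varphi$: surjectivity onto $\mathcal W$ is automatic from the transport condition, and the Monge--Amp\`ere equation $\det D^2\varphi = |\mathcal W|/|\Omega|$ takes over the role of $\Delta u = \mathrm{const}$; both reduce to the same AM--GM step and the observation $P_\Phi(\mathcal W)=n|\mathcal W|$. The transport proof is arguably cleaner in the flat case, but it does not transplant to the minimal-submanifold setting (there is no Brenier map from the normal bundle $N\Sigma$ onto $\mathcal W$), which is precisely why this paper adopts the ABP scheme and the map $T(x,y)=\nabla^\Sigma u(x)+y$ of Section~3. So your proof is an independent derivation of the classical theorem rather than the codimension-zero special case of the paper's argument.

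Two points in your write-up deserve tightening. First, the divergence-theorem step should be stated from the outset as an inequality $\int_{\partial\Omega}T\cdot\nu\,d\mathcal H^{n-1}\ge \int_\Omega \operatorname{tr}D^2_{\mathrm{ac}}\varphi\,dx$: since $\varphi$ is convex, the distributional divergence of $T$ is a nonnegative Radon measure whose absolutely continuous part is $\operatorname{tr}D^2_{\mathrm{ac}}\varphi$, and the possibly nontrivial singular part is nonnegative; writing equality and only later worrying about it understates the issue. Note also that approximating $\Omega$ by smooth domains does not by itself remove this difficulty, because the Brenier map need not be regular up to $\partial\Omega_k$ (Caffarelli's theory gives interior smoothness when the target $\mathcal W$ is convex, not boundary regularity unless $\Omega_k$ is also convex). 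Second, in the rigidity analysis you should explicitly justify ``$\Omega$ may be taken connected'': indecomposability of an equality case follows from strict concavity of $t\mapsto t^{(n-1)/n}$, which is exactly the inequality \eqref{component-inequ} the authors use for disconnected $\Sigma$. With those two points made explicit, the argument is sound.
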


This result was first stated without proof by Wulff in 1901 \cite{wul1901frage}. A complete proof of Theorem  \ref{wulff-inequ}  can be found in Taylor's articles \cite{taylor1974existence, taylor1975unique, taylor1978crystalline}. Cabr{\'e}, Ros-Oton and Serra in \cite{cabre2016sharp} gave a new proof of Theorem \ref{wulff-inequ} via the ABP method.  Figalli, Maggi and Pratelli \cite{figalli2010mass} studied the quantitative version of the codimension zero anisotropic isoperimetric inequality. De Rosa, Kolasiński and Santilli \cite{de2020uniqueness} considered the uniqueness of critical points of the codimensional zero anisotropic isoperimetric problem and  established Heintze-Karcher type inequality. 

A natural question is whether the Wulff inequality holds for minimal submanifolds with nonzero codimension in Euclidean space \footnote{Recently, De Philippis and Pigati \cite{Depp} considered the  Michael-Simon inequality for nonzero codimensional anisotropic minimal submanifolds in Euclidean space via ideas from multilinear Kakeya estimates, where the weight is put on the normal vectors of the submanifolds instead of relative normal vectors on the boundary of  submanifolds.}. To state the inequality, for each $n$-dimensional affine subspace $P$, we denote $\text{proj}_P\mathcal{W}$ the projection of $\mathcal{W}$ to $P$ and we let $\Bar{P}$ be any $n$-dimensional affine subspace such that $|\text{proj}_{\Bar{P}}\mathcal{W}| = \min\{|\text{proj}_P\mathcal{W}|: P\in Gr_{n}(\mathbb{R}^{n+m})\}$ and denote any of them by $W^* = \text{proj}_{\Bar{P}}\mathcal{W}$.  

\begin{conjecture}[Wulff inequality for minimal submanifolds]
    For any $n$-dimensional minimal submanifold $\Sigma$ in $\mathbb{R}^{n+m}$ the following Wulff inequality 
    \begin{equation}\label{eq: wulff inequality conjecture}
        \frac{P_{\Phi}({\Sigma})}{|\Sigma|^{\frac{n-1}{n}}} \geq \frac{P_{\Phi}({W^*})}{|W^*|^{\frac{n-1}{n}}}
    \end{equation}
holds, and equality holds if and only if $\Sigma$ is homothetic to some $W^{*}$ \footnote{Note that by Remark \ref{remark: projection=restriction wulff}, the right hand side of \eqref{eq: wulff inequality conjecture} is the same for any projection with minimal projection area, hence it is well-defined.}.
\end{conjecture}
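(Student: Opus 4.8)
\emph{Proof strategy.} The plan is to adapt Brendle's optimal transport / ABP argument for the minimal-submanifold isoperimetric inequality to the anisotropic weight $\Phi$, replacing the round ball by the Wulff shape $\mathcal{W}$ and its minimal-area projection $W^{*}$. By a routine approximation one reduces to the case where $\Sigma$ is smooth, compact and connected with $\partial\Sigma\neq\emptyset$, and $\Phi$ is smooth, positive and equal to the support function of a uniformly convex body $\mathcal{W}$; the general statement then follows by a limiting argument. Write $F\colon\Sigma\hookrightarrow\mathbb{R}^{n+m}$ for the inclusion, $A$ for the vector-valued second fundamental form, and recall that the mean curvature vector $\vec H=\operatorname{tr}A$ vanishes. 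The first step is to solve the Neumann problem
\begin{equation}
\Delta_{\Sigma}u=\frac{P_{\Phi}(\Sigma)}{|\Sigma|}\ \text{ on }\ \Sigma,\qquad \frac{\partial u}{\partial\nu}(x)=\Phi(\nu(x))\ \text{ on }\ \partial\Sigma,
\end{equation}
where $\nu$ is the outward unit conormal; solvability is immediate from the compatibility identity $\int_{\partial\Sigma}\Phi(\nu)=P_{\Phi}(\Sigma)=\int_{\Sigma}\Delta_{\Sigma}u$, and $u\in C^{\infty}(\overline{\Sigma})$ by elliptic regularity.

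I would then set up the transport map on the normal bundle: for $\bar x\in\Sigma\setminus\partial\Sigma$ and $y\in N_{\bar x}\Sigma$ put $\Phi_{T}(\bar x,y)=\nabla^{\Sigma}u(\bar x)+y\in\mathbb{R}^{n+m}$, and let $U$ be the contact set of those $(\bar x,y)$ for which the function $\bar z\mapsto u(\bar z)-\langle\nabla^{\Sigma}u(\bar x)+y,\,F(\bar z)\rangle$ attains its global minimum over $\Sigma$ at $\bar z=\bar x$. The ABP step is the covering $\Phi_{T}(U)\supseteq\mathcal{W}$: for $\xi\in\mathcal{W}$ the function $v_{\xi}(\bar z)=u(\bar z)-\langle\xi,F(\bar z)\rangle$ attains a minimum at some $\bar x_{0}$, which must be interior because on $\partial\Sigma$ one has $\partial_{\nu}v_{\xi}=\Phi(\nu)-\langle\xi,\nu\rangle\geq0$ by the definition of the Wulff shape; then $\nabla^{\Sigma}u(\bar x_{0})$ is the tangential part $\xi^{\top}$ of $\xi$, the normal part $\xi^{\perp}$ lies in $N_{\bar x_{0}}\Sigma$, the pair $(\bar x_{0},\xi^{\perp})$ belongs to $U$, and $\Phi_{T}(\bar x_{0},\xi^{\perp})=\xi$.

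The third step is the Jacobian estimate and change of variables. Differentiating $\Phi_{T}$ in the base and fibre directions and using the Weingarten relation, the Jacobian of $\Phi_{T}\colon U\to\mathbb{R}^{n+m}$ equals the $n\times n$ determinant $\det\big(D^{2}_{\Sigma}u(\bar x)-\langle y,A(\bar x)\rangle\big)$; this matrix is positive semidefinite on $U$, being the $\Sigma$-Hessian of $v_{\xi}$ at its minimum, so by the inequality $\det M\leq(\tfrac1n\operatorname{tr}M)^{n}$ for $M\geq0$ and by minimality,
\begin{equation}
\det\big(D^{2}_{\Sigma}u-\langle y,A\rangle\big)\leq\Big(\tfrac1n\big(\Delta_{\Sigma}u-\langle y,\vec H\rangle\big)\Big)^{n}=\Big(\frac{P_{\Phi}(\Sigma)}{n\,|\Sigma|}\Big)^{n}.
\end{equation}
The area formula, the covering $\Phi_{T}(U)\supseteq\mathcal{W}$, and Fubini over the fibres $U_{\bar x}=\{y:(\bar x,y)\in U\}$ then give
\begin{equation}
|\mathcal{W}|\ \leq\ \int_{U}\det\big(D^{2}_{\Sigma}u-\langle y,A\rangle\big)\ \leq\ \Big(\frac{P_{\Phi}(\Sigma)}{n\,|\Sigma|}\Big)^{n}\int_{\Sigma}\mathcal{L}^{m}(U_{\bar x})\,d\mathcal{H}^{n}(\bar x).
\end{equation}
Since $\nabla^{\Sigma}u(\bar x)+y\in\mathcal{W}$ on $U$, the fibre $U_{\bar x}$ lies inside the slice of $\mathcal{W}$ by the affine $m$-plane $\nabla^{\Sigma}u(\bar x)+N_{\bar x}\Sigma$; Brunn's theorem bounds this by the central slice, and the Rogers--Shephard section--projection inequality bounds the latter by $\binom{n+m}{m}|\mathcal{W}|/|\text{proj}_{T_{\bar x}\Sigma}\mathcal{W}|\leq\binom{n+m}{m}|\mathcal{W}|/|W^{*}|$. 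Feeding this back, cancelling $|\mathcal{W}|$, and using the self-dual Wulff identity $P_{\Phi}(W^{*})=n\,|W^{*}|$ --- which holds because the support function of $W^{*}$ within $\bar P$ is the restriction of $\Phi$ --- yields the Wulff-type inequality
\begin{equation}
\frac{P_{\Phi}(\Sigma)}{|\Sigma|^{\frac{n-1}{n}}}\ \geq\ \binom{n+m}{m}^{-\frac1n}\,\frac{P_{\Phi}(W^{*})}{|W^{*}|^{\frac{n-1}{n}}},
\end{equation}
with a constant depending only on $n$ and $m$.

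The main obstacle is promoting this constant to the sharp value $1$ when $m=1,2$. The Rogers--Shephard bound on the fibres is far from optimal; in fact $\mathcal{L}^{m}(U_{\bar x})$ can exceed $|\mathcal{W}|/|W^{*}|$ pointwise, so a purely termwise estimate cannot be sharp. To recover sharpness one must exploit the two further restrictions encoded in $U$ --- that $D^{2}_{\Sigma}u-\langle y,A\rangle\geq0$, which shrinks the fibres wherever the second fundamental form is large, and that $|\nabla^{\Sigma}u(\bar x)|$ is small precisely when $\xi$ sits deep inside $\mathcal{W}$ --- and integrate the slice volumes against the correct co-area density of $\Phi_{T}$ in the Wulff-gauge radial variable instead of bounding them one point at a time. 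Because the normal bundle of $\Sigma$ has rank $1$ or $2$ this refined integral can be computed exactly, and for ellipsoids and for the distinguished families of long centrally symmetric bodies it reduces to $(|\mathcal{W}|/|W^{*}|)\,|\Sigma|$, removing the binomial factor and producing $P_{\Phi}(\Sigma)/|\Sigma|^{(n-1)/n}\geq P_{\Phi}(W^{*})/|W^{*}|^{(n-1)/n}$. The equality discussion should then come out of the rigidity in all the inequalities invoked --- equal eigenvalues in $\det M\leq(\tfrac1n\operatorname{tr}M)^{n}$, bijectivity of $\Phi_{T}$, and equality in the slice estimate --- which together force $A\equiv0$, so that $\Sigma$ is totally geodesic and hence a flat domain homothetic to $W^{*}$.
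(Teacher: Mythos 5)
What you set out to prove is stated in the paper as a \emph{conjecture} and remains open in general: the paper resolves it only for $m\in\{1,2\}$ with $\mathcal{W}$ an ellipsoid or a long convex body (Theorem \ref{thm: sharp cases}) and otherwise gives a non-sharp dimensional constant $c_{n,m}$ (Theorem \ref{thm: nonsharp codim 1}). Your sketch reproduces the paper's ABP/transport setup exactly---the Neumann problem \eqref{PDE neumann}, the transport map $T$ on the normal bundle, the covering $T(A)\supset\mathcal{W}^{\mathrm{int}}$, and the AM--GM Jacobian bound---and then diverges: you bound the fibre measures pointwise via Brunn's theorem and the Rogers--Shephard section--projection inequality, arriving at $P_\Phi(\Sigma)/|\Sigma|^{(n-1)/n}\geq\binom{n+m}{m}^{-1/n}P_\Phi(W^*)/|W^*|^{(n-1)/n}$. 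After one repair---the covering property is one-directional, so $\nabla^\Sigma u(\bar x)+y\in\mathcal{W}$ is not automatic on your contact set $U$, and you should first intersect $U$ with $T^{-1}(\mathcal{W})$ before slicing---this is a valid non-sharp Wulff inequality and a genuinely different route: in the paper's language it corresponds to the one particular choice $f=1_{\mathcal{W}}/\max_P|\mathcal{W}\cap P|\in\mathcal{F}_{n,m}$ in Theorem \ref{aniso-peri}, whereas Theorem \ref{thm: nonsharp codim 1} goes through John's ellipsoid. Neither constant dominates the other across all $(n,m)$, so your bound stands on its own as a non-sharp variant.

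The genuine gap is your final paragraph on sharpness and rigidity. The paper's crucial device is the relaxation of $1_{\mathcal{W}}$ to an arbitrary density $f\geq 0$ supported in $\mathcal{W}$ with $\int_P f\leq 1$ over every affine $m$-plane $P$, yielding the variational bound of Theorem \ref{aniso-peri}, followed by the \emph{construction} of explicit optimizing densities: the chord-length reciprocal $\tilde f(x)\propto(1-|\Lambda x|^2)^{-1/2}$ of Proposition \ref{codim-1} for $m=1$, the concentrating ellipsoidal-shell densities $f_\sigma\propto 1_{U_\sigma}$ of Proposition \ref{codim-2} for $m=2$, and their propagation from ellipsoids to long bodies via Lemma \ref{lemma: necessary condition}. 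Your phrase ``integrate the slice volumes against the correct co-area density of $\Phi_T$ in the Wulff-gauge radial variable'' gestures at the $m=1$ density but never constructs it, has no analogue of the $m=2$ shell limit, and the assertion that the ``refined integral'' equals $(|\mathcal{W}|/|W^*|)|\Sigma|$ for ellipsoids and long bodies is precisely the content of Propositions \ref{codim-1}--\ref{long-body}, which you do not prove. Your rigidity sentence likewise skips the actual difficulty at $m=2$: since only a sequence $f_\sigma$, not a single maximizer, achieves $\sup\int f=|W^*|$, the equality argument (Claim \ref{ellip-partial}) must be run through a limit, and one must additionally observe that $T(N_x\Sigma)\cap\partial\mathcal{W}$ is a full $1$-dimensional ellipse for a.e.\ $x$; neither step appears in your proposal.
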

In this paper, by adapting arguments of Brendle and Cabre, we first prove a boundary weighted isoperimetric inequality for minimal submanifolds in Euclidean space:
\begin{theorem}\label{aniso-peri}
    Let $\mathcal{W}$ be centrally symmetric $n+m$ dimensional convex body in $\mathbb{R}^{n+m}$, and $\Phi$ be the corresponding support function. Let $\Sigma$ be a minimal hypersurface with boundary $\partial \Sigma$ embedded in $\mathbb{R}^{n+m}$. We have
    \begin{equation}\label{opt-inequ}
        P_{\Phi}({\Sigma}) \geq n(\sup_{\mathcal{F}_{n,m}}\int_{\mathcal{W}} f)^{\frac{1}{n}} |\Sigma|^{\frac{n-1}{n}}
    \end{equation}
    where
    \begin{equation}\label{F_n,m space}
        \mathcal{F}_{n,m} = \{f \in L^1(\mathbb{R}^{n+m}): \textrm{supp}(f)\subset\mathcal{W}, f\geq 0, \int_Pf \leq 1 \,\,\text{for}\,\,P\in \textrm{Graff}_m(\mathbb{R}^{n+m})\}\footnote{We note that the space  $\mathcal{F}_{n,m}$ may not be compact. }
    \end{equation}
and $\textrm{Graff}_m(\mathbb{R}^{n+m})$ is the affine Grassmannian consisting of all $m$-dimensional affine subspaces in $\mathbb{R}^{n+m}$
\end{theorem}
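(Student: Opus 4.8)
\emph{Proof proposal.} We will run an Alexandrov--Bakelman--Pucci (ABP) argument in the spirit of Brendle \cite{Brendle2021} and of Cabr\'e--Ros-Oton--Serra \cite{cabre2016sharp}, with the anisotropy $\Phi$ entering simultaneously as the Neumann datum on $\partial\Sigma$ and through the geometry of the target. By standard reductions we may assume $\Sigma$ is compact and connected with smooth boundary — the disconnected case follows from the connected one because $t\mapsto t^{(n-1)/n}$ is subadditive — and that $\mathcal W$ is smooth and uniformly convex, so that $\Phi$ is a smooth norm on $\mathbb R^{n+m}\setminus\{0\}$; the general Wulff shape follows by approximating from outside by smooth uniformly convex bodies $\mathcal W_k\downarrow\mathcal W$, for which $\Phi_k\downarrow\Phi$ gives $P_{\Phi_k}(\Sigma)\to P_\Phi(\Sigma)$, while $\mathcal F_{n,m}\subseteq\mathcal F_{n,m}^{(k)}$ (admissibility only weakens as $\mathcal W$ grows) shows the right-hand side for $\mathcal W$ is dominated by that for $\mathcal W_k$. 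We then solve the Neumann problem
\[
\Delta_\Sigma u=\kappa:=\frac{P_\Phi(\Sigma)}{|\Sigma|}\ \text{ in }\Sigma,\qquad \langle\nabla^\Sigma u,\nu\rangle=\Phi(\nu)\ \text{ on }\partial\Sigma,
\]
with $\nu$ the outward unit conormal; the compatibility condition $\kappa|\Sigma|=\int_{\partial\Sigma}\Phi(\nu)$ holds by the choice of $\kappa$, and elliptic theory gives $u\in C^{2,\alpha}_{\mathrm{loc}}(\Sigma)\cap C^{1}(\overline\Sigma)$.

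Following Brendle, we consider the map on the total space of the normal bundle,
\[
F(x,z)=\nabla^\Sigma u(x)+z\in\mathbb R^{n+m},\qquad x\in\Sigma\setminus\partial\Sigma,\ z\in N_x\Sigma,
\]
and the contact set $U=\{(x,z):\ x\in\Sigma\setminus\partial\Sigma,\ z\in N_x\Sigma,\ \nabla^2_\Sigma u(x)-A_z\ge0\}$, where $A_z$ is the symmetric bilinear form $\langle A_z(e),e'\rangle=\langle\mathrm{II}(e,e'),z\rangle$ built from the second fundamental form $\mathrm{II}$ of $\Sigma$. The crucial surjectivity statement is $F(U)\supseteq\mathrm{int}(\mathcal W)$: for $\xi\in\mathrm{int}(\mathcal W)$ the function $h_\xi=u-\langle\xi,\cdot\rangle$ satisfies $\langle\nabla^\Sigma h_\xi,\nu\rangle=\Phi(\nu)-\langle\xi,\nu\rangle>0$ on $\partial\Sigma$, which is precisely the statement that $\xi$ is strictly inside the Wulff shape ($\langle\xi,\nu\rangle<\Phi(\nu)$ for every unit $\nu$). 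Hence the minimum of $h_\xi$ over the compact $\Sigma$ is attained at an interior point $x^\ast$; there $\nabla^\Sigma u(x^\ast)=\xi^\top$ and $\nabla^2_\Sigma h_\xi(x^\ast)=\nabla^2_\Sigma u(x^\ast)-A_{\xi^\perp}\ge0$, so $(x^\ast,\xi^\perp)\in U$ and $F(x^\ast,\xi^\perp)=\xi$.

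Next we estimate the Jacobian. Computing in a geodesic orthonormal tangent frame together with a parallel orthonormal normal frame, $dF$ becomes block triangular with diagonal blocks $\nabla^2_\Sigma u-A_z$ (size $n$) and $I_m$, so $|\det dF|=\det(\nabla^2_\Sigma u-A_z)$; on $U$ this matrix is positive semidefinite, and the arithmetic--geometric mean inequality bounds it by $\big(\tfrac1n\,\mathrm{tr}(\nabla^2_\Sigma u-A_z)\big)^n=\big(\tfrac1n(\Delta_\Sigma u-\langle H,z\rangle)\big)^n=(\kappa/n)^n$, the mean curvature vector term $\langle H,z\rangle$ vanishing because $\Sigma$ is minimal. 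Applying the area formula with the weight $f$, and using $F(U)\supseteq\mathrm{int}(\mathcal W)$ together with $f\ge0$, $\mathrm{supp}\,f\subseteq\mathcal W$,
\[
\int_{\mathcal W}f\ \le\ \int_U (f\circ F)\,|\det dF|\ \le\ \Big(\frac{\kappa}{n}\Big)^{n}\int_\Sigma\Big(\int_{N_x\Sigma}f\big(\nabla^\Sigma u(x)+z\big)\,dz\Big)\,d\mathrm{vol}_\Sigma(x).
\]
For each fixed $x$ the inner integral is $\int_{P_x}f$ over the $m$-dimensional affine subspace $P_x=\nabla^\Sigma u(x)+N_x\Sigma\in\mathrm{Graff}_m(\mathbb R^{n+m})$, hence $\le1$ by the very definition of $\mathcal F_{n,m}$. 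Therefore $\int_{\mathcal W}f\le(\kappa/n)^n|\Sigma|=P_\Phi(\Sigma)^n/(n^n|\Sigma|^{n-1})$, i.e. $P_\Phi(\Sigma)\ge n(\int_{\mathcal W}f)^{1/n}|\Sigma|^{(n-1)/n}$; taking the supremum over $f\in\mathcal F_{n,m}$ yields \eqref{opt-inequ}.

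The conceptual heart is compressed into one line: the fiberwise integral $\int_{N_x\Sigma}f(\nabla^\Sigma u(x)+z)\,dz$ is an integral of $f$ over an $m$-plane, hence at most $1$; this is what replaces the naive lower bound $|\mathcal W|\le|F(U)|$ used in the codimension-zero Wulff inequality, and it is exactly the role played by the class $\mathcal F_{n,m}$. I expect the remaining work to be routine but careful measure-theoretic and regularity bookkeeping: solvability and boundary regularity of the Neumann problem, Borel measurability of $U$ and local Lipschitz regularity of $F$ on the interior (the fibers $\{z:(x,z)\in U\}$ may be unbounded where $\Sigma$ is flat, but the estimates above still go through), the fact that the minimum of $h_\xi$ is genuinely interior for every $\xi\in\mathrm{int}(\mathcal W)$, and the passage to the limit in the smoothing of $\mathcal W$. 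Finally, I would not expect the constant to be sharp for $m\ge3$; recovering sharpness when $m=1,2$ should, as in Brendle's treatment, require retaining the fiber factor $(1-|\nabla^\Sigma u|^2)^{m/2}$ that we discarded above, together with an optimal choice of the competitor $f$.
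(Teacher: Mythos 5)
Your proposal is correct and reproduces the paper's proof of Theorem~\ref{aniso-peri} essentially step for step: the same Neumann problem~\eqref{PDE neumann}, the same normal-bundle transport map $T(x,y)=\nabla^\Sigma u(x)+y$ and contact set, the same surjectivity argument via interior minima of $u-\langle\,\cdot\,,\xi\rangle$ for $\xi\in\mathcal W^{\mathrm{int}}$, the same arithmetic--geometric-mean bound on $\det DT$ using minimality, and the same area-formula/Fubini step in which the fiber-integral constraint $\int_{P_x}f\le1$ plays the role of the codimension-zero measure bound. The smoothing of $\mathcal W$ you add is harmless but not needed (since $\Phi$ enters the PDE only as a continuous Neumann datum), and your closing remark slightly mischaracterizes the sharpness mechanism: the constraint $\int_{P_x}f\le1$ is not a loss to be undone by reinstating $(1-|\nabla^\Sigma u|^2)^{m/2}$, it is precisely the quantity the paper's competitors in Propositions~\ref{codim-1} and~\ref{codim-2} are constructed to saturate; but that is Theorem~\ref{thm: sharp cases} and does not affect the proof at hand.
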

\begin{remark}\label{rem opt}
 By Fubini's theorem  we have the trivial upper bound
\begin{equation}\label{trivial-bound}
  \sup_{\mathcal{F}_{n,m}}  \int_{\mathcal{W}} f  \leq |W^*|.
\end{equation}
It is also clear that the supremum is always positive, as the function $f = \chi_{B^{n+m}(\delta)}$ for  $\delta>0$ sufficiently small is in $\mathcal{F}_{n, m}$. Later in \eqref{lower bound f}, we will provide a better estimate of the quantity on left hand side of \eqref{trivial-bound} for general Wulff shapes $\mathcal{W}$, and show that in some special cases the equality in \eqref{trivial-bound} holds.
\end{remark}
To state our next result, we define a class of convex bodies generated from ellipsoids using gluing and cutting operations. We refer to this class of convex bodies as \textit{long convex bodies}, since a typical example of such shape is given by $\mathcal{W} = K \times [-T, T]^m\subset \mathbb{R}^{n+m}$ where $K$ is any $n$-dimensional centrally symmetric convex body and $2T \geq \text{diam}(K)$.

\begin{definition}[long convex body]\label{long-body-def}
    Let $\mathcal{E}_{n+m}$ be the set of all $(n+m)$-dimensional ellipsoids centering at origin. For $i = 1, 2$, let $\mathcal{W}_i$ be centrally symmetric convex bodies in $\mathbb{R}^{n+m}$, and $W_i^*$ some area-minimizing projection for $\mathcal{W}_i$. Let $P$ be an $n$-dimensional affine subspace such that $|\text{proj}_P\mathcal{W}_1| = |W_1^*|$. The set of long convex bodies $\mathcal{L}_{n,m}$ is the smallest set of centrally symmetric convex bodies satisfying the following conditions:
    \begin{itemize}
        \item $\mathcal{E}_{n+m} \subset \mathcal{L}_{n,m}$.
        \item Gluing: if $\mathcal{W}_1 \in \mathcal{L}_{n,m}$, $\mathcal{W}_2 \supset \mathcal{W}_1$ and $\text{proj}_P\mathcal{W}_1 = \text{proj}_P\mathcal{W}_2$, then $\mathcal{W}_2 \in \mathcal{L}_{n,m}$.
        \item Cutting: if $\mathcal{W}_1 \in \mathcal{L}_{n,m}$, and $\mathcal{W}_2 = (\text{proj}_P)^{-1}(\text{proj}_P\mathcal{W}_2) \cap \mathcal{W}_1$, then $\mathcal{W}_2 \in \mathcal{L}_{n,m}$.
    \end{itemize}
\end{definition}
\begin{remark}\label{remark: necessary condition for equality}
    It is clear that $\mathcal{L}_{n,m}$ consists of all centrally symmetric convex bodies that can be obtained by performing a finite number of gluing and cutting operations on an ellipsoid. Performing these operations repeatedly elongates the shape in the normal directions of the area-minimizing projection. These two operations provide necessary conditions for equality in \eqref{trivial-bound}, in the sense that performing gluing or cutting along some area-minimizing projection direction should preserve the fact that it is an area-minimizing projection direction. Specifically, if the equality in \eqref{trivial-bound} holds for some $\mathcal{W}_1$, and $\mathcal{W}_2$ (not necessarily centrally symmetric here) is generated from $\mathcal{W}_1$ by gluing or cutting with respect to some $P$ such that $\text{proj}_P\mathcal{W}_1$ is an area-minimizing projection for $\mathcal{W}_1$, then $\text{proj}_P\mathcal{W}_2$ must also be an area-minimizing projection for $\mathcal{W}_2$, and the equality in \eqref{trivial-bound} still holds for $\mathcal{W}_2$ (see Lemma \ref{lemma: necessary condition}). For example, when $\mathcal{W}$ is an $(n+1)$-dimensional cube or a short cylinder of the form $B^n \times [-\frac{\eps}{2}, \frac{\eps}{2}]$ for small $\eps>0$ in Figure \ref{projection of cylinder cubes}, it is possible to cut along some area-minimizing projection direction, such that after the cut, it is no longer the area-minimizing projection direction for the resulting shape. Hence, the equality in \eqref{trivial-bound} must be strict for the cube and short cylinder. 
\end{remark}
\begin{figure}[h!]\label{projection of cylinder cubes}
 \centering
  \includegraphics[width=1\textwidth]{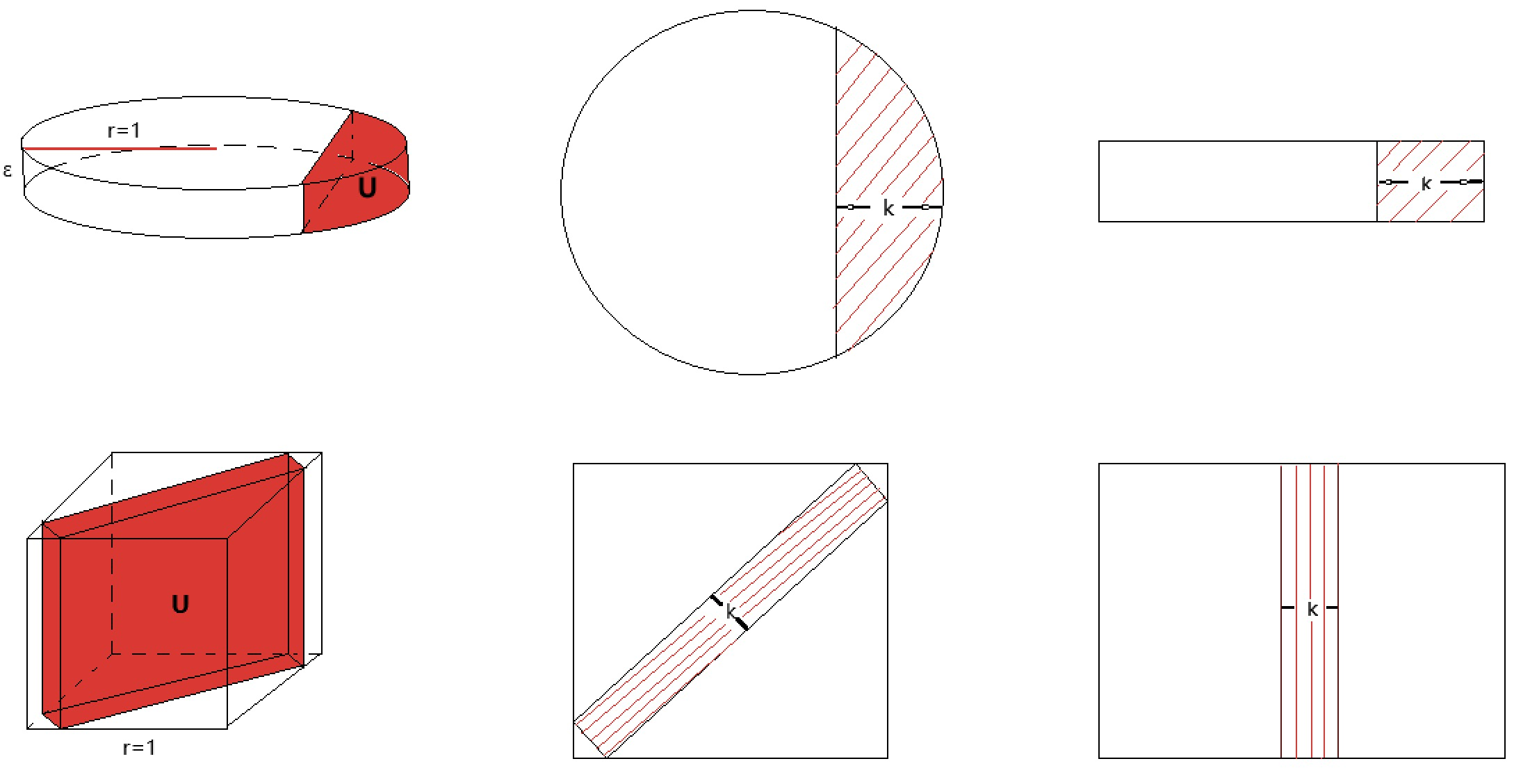}
 \caption{\textbf{Projections of short cylinder and cube.} For short cylinder, projection along any horizontal direction is area-minimizing. However, cutting along a horizontal direction sufficiently close to boundary gives a shape $U$ such that horizontal projection is no longer area-minimizing, since the vertical projection has area roughly $k^{3/2}$ and the horizontal projection has area $\eps k$. Similarly, the vertical projection was area-minimizing for cube, but such a vertical cut gives $U$, whose vertical projection has area roughly $\sqrt2 k$ and horizontal projection has area $k$.}
 \end{figure}

In the cases of codimension one and two, by designing good candidates on ellipsoids and long convex bodies, we verify the above two conjectures when $\mathcal{W} \in \mathcal{L}_{n,m}$ ($m=1, 2$) and the weights are support functions of $\mathcal{W}$. 
\begin{theorem}\label{thm: sharp cases}
  For $m=1, 2$,  let $\mathcal{W} \in \mathcal{L}_{n,m}$ and $\Phi$ be the corresponding support function, then 
    for $\Sigma$ being a minimal submanifold with boundary $\partial \Sigma$ embedded in $\mathbb{R}^{n+m}$, we have
    \begin{equation}\label{wulff-isop}
        \frac{P_{\Phi}({\Sigma})}{|\Sigma|^{\frac{n-1}{n}}} \geq \frac{P_{\Phi}({W^*})}{|W^*|^{\frac{n-1}{n}}},
    \end{equation}
    In particular, equality holds if and only if $\Sigma$ is homothetic to $W^*$.
\end{theorem}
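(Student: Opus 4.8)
The plan is to deduce \eqref{wulff-isop} from Theorem~\ref{aniso-peri} by showing that for $\mathcal{W}\in\mathcal{L}_{n,m}$ with $m\in\{1,2\}$ the supremum in \eqref{opt-inequ} attains the trivial upper bound \eqref{trivial-bound}, i.e.\ $\sup_{\mathcal{F}_{n,m}}\int_{\mathcal{W}}f=|W^*|$. Granting this, the reduction is immediate: if $\bar P$ is an $n$-plane realizing the minimal projection, then $\Phi|_{\bar P}$ is exactly the support function of $W^*=\mathrm{proj}_{\bar P}\mathcal{W}$ (cf.\ Remark~\ref{remark: projection=restriction wulff}), so $W^*$ is its own Wulff shape, and since the outward unit normal $\nu(x)$ supports $W^*$ at $x\in\partial W^*$, the divergence theorem gives $P_\Phi(W^*)=\int_{\partial W^*}x\cdot\nu(x)\,d\mathcal{H}^{n-1}=n|W^*|$. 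Hence $P_\Phi(W^*)/|W^*|^{(n-1)/n}=n|W^*|^{1/n}=n\bigl(\sup_{\mathcal{F}_{n,m}}\int_{\mathcal{W}}f\bigr)^{1/n}$, and \eqref{opt-inequ} becomes precisely \eqref{wulff-isop}.

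For the supremum identity I would induct on the number of gluing and cutting operations needed to produce $\mathcal{W}$ from an ellipsoid, the base case being ellipsoids. For the unit ball $B^{n+m}$, the radial function $f_s(x)=c_s(1-|x|^2)^{-s}$ has $\int_Pf_s$ equal to a dimensional constant times $(1-\mathrm{dist}(0,P)^2)^{m/2-s}$ on every affine $m$-plane $P$, because $P\cap B^{n+m}$ is an $m$-ball of radius $(1-\mathrm{dist}(0,P)^2)^{1/2}$ on which $|x|^2=\mathrm{dist}(0,P)^2+\rho^2$. Thus the balanced exponent $s=m/2$ makes all the $m$-plane integrals equal; for $m=1$ this is the admissible value $s=\tfrac12$, the integrals converge, and after normalizing $c_{1/2}$ so that $\int_Pf_{1/2}\equiv 1$ a Beta-function computation yields $\int_{B^{n+1}}f_{1/2}=|B^n|$. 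For $m=2$ the value $s=1$ is not integrable over $m$-planes, so one uses $f_s=\tfrac{1-s}{\pi}(1-|x|^2)^{-s}\in\mathcal{F}_{n,2}$ and checks $\int_{B^{n+2}}f_s\to|B^n|$ as $s\to 1^-$. (The same computation explains the restriction $m\le 2$: for $m\ge 3$ the balanced exponent $s=m/2$ fails to be integrable and the best admissible $f_s$ gives strictly less than $|B^n|$.) A general ellipsoid $\mathcal{E}=A(B^{n+m})$, with $A$ having singular values $\sigma_1\ge\cdots\ge\sigma_{n+m}$, is handled by transport: $\tilde f(Ax)=f_s(x)/(\sigma_1\cdots\sigma_m)$ lies in $\mathcal{F}_{n,m}$ since $\sigma_1\cdots\sigma_m$ is the largest $m$-dimensional Jacobian of $A$, and $\int_{\mathcal{E}}\tilde f=(\sigma_{m+1}\cdots\sigma_{n+m})\int_{B^{n+m}}f_s$, whose supremum over admissible $f_s$ is $(\sigma_{m+1}\cdots\sigma_{n+m})|B^n|=|W^*|$, using that the minimal $n$-dimensional projection of an ellipsoid is the projection onto the span of its $n$ shortest axes.

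The inductive step is soft, the reverse inequality always being \eqref{trivial-bound}. For gluing ($\mathcal{W}_2\supset\mathcal{W}_1$ with the same projection onto an area-minimizing $n$-plane $P$) a near-optimal competitor for $\mathcal{W}_1$, extended by zero, competes for $\mathcal{W}_2$, and $|W_2^*|=|W_1^*|$ by Lemma~\ref{lemma: necessary condition}. For cutting ($\mathcal{W}_2=\mathrm{proj}_P^{-1}(\mathrm{proj}_P\mathcal{W}_2)\cap\mathcal{W}_1$) one multiplies a near-optimal competitor $f_k\in\mathcal{F}_{n,m}$ for $\mathcal{W}_1$ by $\chi_{\mathcal{W}_2}$; since $\int_{\mathcal{W}_1}f_k\to|W_1^*|=|\mathrm{proj}_P\mathcal{W}_1|$ and every fibre integral $z\mapsto\int_{\mathrm{proj}_P^{-1}(z)}f_k$ is $\le 1$, these fibre integrals converge to $1$ in $L^1(\mathrm{proj}_P\mathcal{W}_1)$, so restricting the footprint to $\mathrm{proj}_P\mathcal{W}_2$ still recovers $|\mathrm{proj}_P\mathcal{W}_2|=|W_2^*|$ (again by Lemma~\ref{lemma: necessary condition}). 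This completes the inequality.

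For equality in \eqref{wulff-isop}, the ``if'' direction is the scaling identity $P_\Phi(aW^*+b)=a^{n-1}P_\Phi(W^*)$, $|aW^*+b|=a^n|W^*|$ for the flat, hence minimal, copy $aW^*+b$. For ``only if'', equality in \eqref{wulff-isop} forces equality in \eqref{opt-inequ}, and I would trace this through the ABP/optimal-transport proof of Theorem~\ref{aniso-peri}: refining the rigidity analysis of \cite{Brendle2021}, equality forces the second fundamental form of $\Sigma$ to vanish, so $\Sigma$ is a convex domain in some $n$-plane $Q$; the equality case of the classical Wulff inequality (Theorem~\ref{wulff-inequ}) applied inside $Q$ then identifies $\Sigma$ as a homothet of the Wulff shape of $\Phi|_Q=\mathrm{proj}_Q\mathcal{W}$, and matching the extremal constant forces $|\mathrm{proj}_Q\mathcal{W}|=|W^*|$, i.e.\ $Q$ is itself an area-minimizing projection plane, so $\Sigma$ is homothetic to some $W^*$. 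I expect the main obstacle to be exactly this ``only if'' step: when $m=2$ the optimal weight is only reached in a limit (and after a cut one must also keep competitors exactly admissible), so equality has to be extracted through an almost-rigidity and compactness argument rather than from a single extremal $f$; and since $f$ blows up like $(1-|x|^2)^{-s}$ near $\partial\mathcal{W}$, the pointwise analysis of the contact set and the differential identities in the ABP argument need care with this singular weight. By contrast, the Beta-function computations on balls, the singular-value bookkeeping for ellipsoids, and the two inductive steps are routine.
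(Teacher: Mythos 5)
Your overall strategy coincides with the paper's: reduce \eqref{wulff-isop} to the identity $\sup_{\mathcal{F}_{n,m}}\int_{\mathcal{W}}f=|W^*|$ via Theorem~\ref{aniso-peri} and $P_\Phi(W^*)=n|W^*|$, establish it for ellipsoids, and propagate it along gluing and cutting. For $m=1$ your density is not merely analogous to Proposition~\ref{codim-1} but identical: pushing $f_{1/2}(x)=\tfrac1\pi(1-|x|^2)^{-1/2}$ forward by $A=\Lambda^{-1}$ and dividing by the largest singular value $\lambda_{n+1}$ reproduces the paper's $\tilde f=\tfrac{1}{\pi\lambda_{n+1}}(1-|\Lambda y|^2)^{-1/2}$.

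You genuinely diverge at the $m=2$ ellipsoid step. Proposition~\ref{codim-2} uses normalized indicators of thin shells $U_\sigma=\mathcal W\setminus\sigma\mathcal W$; you instead use $f_s=\tfrac{1-s}{\pi}(1-|x|^2)^{-s}$ as $s\to1^-$, transported to the ellipsoid with singular-value normalization $\sigma_1\sigma_2$. Your identities $\int_P f_s=(1-\operatorname{dist}(0,P)^2)^{1-s}\le 1$, the Beta-function limit $\int_{B^{n+2}}f_s\to|B^n|$, and the admissibility bound $J_m(A|_Q)\le\sigma_1\cdots\sigma_m$ for every $m$-plane $Q$ are all correct, so this works; it is a clean alternative to the paper's direct slicing computation and makes visible why $m\le2$ is the threshold (the balanced exponent $s=m/2$ is admissible for $m=1$, reachable only in the limit for $m=2$, unreachable for $m\ge3$). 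The inductive step via $L^1$ convergence of fibre integrals is the same argument the paper isolates as Lemma~\ref{lemma: necessary condition}.

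The one genuine gap is the ``only if'' direction of the equality statement, which you correctly flag as the main obstacle but leave as a sketch. The paper carries it out in Proposition~\ref{equality} and Claim~\ref{ellip-partial}: for $m=1$, equality in \eqref{eq: inequality1} with the exact extremal $\tilde f$ forces $D^2_\Sigma u-\langle \textrm{I\!I}_\Sigma,y\rangle$ to be scalar a.e., hence $\textrm{I\!I}_\Sigma=0$; for $m=2$, where only a maximizing sequence exists, Claim~\ref{ellip-partial} uses that the mass of $f_\sigma$ concentrates near $S=\mathcal W\cap\partial E$, together with the fact that $T(N_x\Sigma)\cap S$ is a full $1$-dimensional ellipse for a.e.\ $x$, to conclude $\textrm{I\!I}_\Sigma\perp N_x\Sigma$ and hence $\textrm{I\!I}_\Sigma=0$. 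With your $f_s$ the same scheme should work, but the supports are all of $\mathcal W$ rather than a shrinking shell, so ``concentration near $S$'' must be argued in terms of mass rather than support, and the fibre integrals $\int_{N_x\Sigma}f_s$ must first be shown to converge to $1$ for a.e.~$x$ before the localization argument of Claim~\ref{ellip-partial} can be run. This is a fixable omission rather than a wrong turn, but it is a step you have not closed.
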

\begin{remark}
    In higher codimensional case when $\mathcal{W}$ is an ellipsoid centering at origin and $m > 2$, we will show in Proposition \ref{codim-2} that
    \begin{equation}
        \frac{P_{\Phi}({\Sigma})}{|\Sigma|^{\frac{n-1}{n}}} \geq \left(\frac{(n+m)|B^{n+m}|}{m|B^m||B^n|}\right)^{\frac{1}{n}}\frac{P_{\Phi}({W^*})}{|W^*|^{\frac{n-1}{n}}}.
    \end{equation}
    Combine with the sharp cases in Theorem \ref{thm: sharp cases}, the isotropic (i.e. $\mathcal{W} = B^{n+m}$ and $\Phi(x) = |x|$) results in \cite{Brendle2021} can be generalized to any ellipsoid.
\end{remark}

More generally we prove a Wulff inequality for minimal submanifolds in the Euclidean space, where the constant depends only on dimension and codimension.
\begin{theorem}\label{thm: nonsharp codim 1}
    Let $\mathcal{W}$ be centrally symmetric $(n+m)$-dimensional convex body in $\mathbb{R}^{n+m}$ and $\Phi$ be the corresponding support function. Then for a minimal submanifold $\Sigma$ with boundary $\partial \Sigma$ embedded in $\mathbb{R}^{n+m}$, we have
    \begin{equation}\label{nonsharp-wulff-isop}
        \frac{P_{\Phi}({\Sigma})}{|\Sigma|^{\frac{n-1}{n}}} \geq c_{n, m}\frac{P_{\Phi}({W^*})}{|W^*|^{\frac{n-1}{n}}},
    \end{equation}
    where 
    \begin{equation}
        c_{n,m}=\begin{cases}
            \frac{1}{\sqrt{n+m}} & m = 1,2 ,\\
            \frac{1}{\sqrt{n+m}}\left(\frac{(n+m)|B^{n+m}|}{m|B^m||B^n|}\right)^{\frac{1}{n}} & m \geq 3
        \end{cases}
    \end{equation}
    is a constant only depending on $m$ and $n$ but independent of the weight $\Phi$.\footnote{The Wulff inequality is scaling-invariant for weights or $(n+m)$-dimensional Wulff shape, but the space of $(n+m)$-dimensional Wulff shapes, after taking quotient by the scaling symmetry, is non-compact.}
\end{theorem}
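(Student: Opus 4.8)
The plan is to feed Theorem~\ref{aniso-peri} with two inputs: the identity $P_\Phi(W^*)=n|W^*|$, and the lower bound $\sup_{\mathcal F_{n,m}}\int_{\mathcal W}f\ge c_{n,m}^{\,n}|W^*|$, which is precisely the estimate \eqref{lower bound f} announced in Remark~\ref{rem opt}.

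I would first dispose of the identity. After a translation we may take the area-minimizing $n$-plane $\bar P$ to pass through the origin, so that $W^*=\text{proj}_{\bar P}\mathcal W$ is centrally symmetric. Since linear projection corresponds to restriction of support functions, $h_{W^*}(u)=h_{\mathcal W}(u)=\Phi(u)$ for every $u\in\bar P$; in particular, if $\nu(x)$ denotes the outward unit conormal of $\partial W^*$ inside $\bar P$, then $\Phi(\nu(x))=h_{W^*}(\nu(x))=x\cdot\nu(x)$ for $\mathcal H^{n-1}$-a.e.\ $x\in\partial W^*$. The divergence theorem on $W^*\subset\bar P$ then gives
\begin{equation*}
P_\Phi(W^*)=\int_{\partial W^*}\Phi(\nu)\,d\mathcal H^{n-1}=\int_{\partial W^*}x\cdot\nu\,d\mathcal H^{n-1}=n|W^*|,
\end{equation*}
so $P_\Phi(W^*)/|W^*|^{(n-1)/n}=n|W^*|^{1/n}$. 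Dividing \eqref{opt-inequ} by $|\Sigma|^{(n-1)/n}$, we see that \eqref{nonsharp-wulff-isop} follows once we know $\sup_{\mathcal F_{n,m}}\int_{\mathcal W}f\ge c_{n,m}^{\,n}|W^*|$.

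For the latter, let $E$ be the John ellipsoid of the centrally symmetric body $\mathcal W$, centered at the origin, so that $E\subset\mathcal W\subset\sqrt{n+m}\,E$, and write $E^*$ for an area-minimizing $n$-dimensional projection of $E$. Since $E\subset\mathcal W$, every $f\ge0$ with $\text{supp}(f)\subset E$ and $\int_Pf\le1$ for all $P\in\textrm{Graff}_m(\mathbb R^{n+m})$ already lies in $\mathcal F_{n,m}$, so $\sup_{\mathcal F_{n,m}}\int_{\mathcal W}f$ is at least the corresponding supremum over competitors supported in $E$. On the ellipsoid I would invoke the constructions already in the paper: for $m=1,2$ the sharp competitors behind Theorem~\ref{thm: sharp cases} realize equality in \eqref{trivial-bound} for $E$, i.e.\ this supremum equals $|E^*|$; for $m\ge3$, Proposition~\ref{codim-2} gives the lower bound $\frac{(n+m)|B^{n+m}|}{m|B^m||B^n|}|E^*|$. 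Concretely, in the case $m\ge3$ one writes $E=TB^{n+m}$ with $T=\mathrm{diag}(a_1,\dots,a_{n+m})$, $a_1\ge\cdots\ge a_{n+m}>0$, takes on $B^{n+m}$ the radial ``thin spherical shell'' profile $g_\eps=c_\eps\chi_{\{1-\eps<|x|<1\}}$ normalized so that its central $m$-plane slice has mass $1$ — whose largest slice over all $m$-planes is indeed the central one precisely when $m\ge3$, and whose total mass tends to $\frac{(n+m)|B^{n+m}|}{m|B^m|}$ as $\eps\to0$ — and transplants it by $f=(g_\eps\circ T^{-1})/(a_1\cdots a_m)$. Since the $m$-dimensional Jacobian of $T$ on any $m$-plane is at most $a_1\cdots a_m$ while $\det T=a_1\cdots a_{n+m}$, all slices of $f$ remain $\le1$ and $\int_Ef\to a_{m+1}\cdots a_{n+m}\cdot\frac{(n+m)|B^{n+m}|}{m|B^m|}\ge\frac{(n+m)|B^{n+m}|}{m|B^m||B^n|}|E^*|$, using $|B^n|a_{m+1}\cdots a_{n+m}\ge|E^*|$. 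Finally, $\mathcal W\subset\sqrt{n+m}\,E$ gives $|\text{proj}_V\mathcal W|\le(n+m)^{n/2}|\text{proj}_VE|$ for every $n$-plane $V$, and choosing $V$ an area-minimizing direction for $E$ yields $|W^*|\le(n+m)^{n/2}|E^*|$. Concatenating, $\sup_{\mathcal F_{n,m}}\int_{\mathcal W}f\ge|E^*|\ge(n+m)^{-n/2}|W^*|$ when $m=1,2$, and $\ge\frac{(n+m)|B^{n+m}|}{m|B^m||B^n|}(n+m)^{-n/2}|W^*|$ when $m\ge3$, i.e.\ $\ge c_{n,m}^{\,n}|W^*|$ in all cases; Theorem~\ref{aniso-peri} then completes the proof.

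The only genuinely substantive point, in my view, is the ellipsoid estimate underlying \eqref{lower bound f}. For $m\ge3$ it hinges on the elementary but not automatic fact that the central $m$-plane section maximizes the slice mass of the spherical-shell profile; this is merely borderline for $m=2$ and outright false for $m=1$ (a shell carries its mass where near-tangent slices are long), which is exactly why the low-codimension cases must instead go through the dedicated sharp competitors behind Theorem~\ref{thm: sharp cases}. One must also keep in mind that $\mathcal F_{n,m}$ need not be closed, as noted after \eqref{F_n,m space}, so the whole argument runs with suprema along the family $g_\eps$ rather than with a single extremizer. The remaining ingredients — $P_\Phi(W^*)=n|W^*|$, the symmetric John containment, the Jacobian bound for diagonal maps, and the projection monotonicity — are routine.
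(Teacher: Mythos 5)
Your proof is correct and follows essentially the same route as the paper: both reduce to the lower bound $\sup_{\mathcal F_{n,m}}\int_{\mathcal W}f\ge c_{n,m}^n|W^*|$ via the symmetric John ellipsoid sandwich $E\subset\mathcal W\subset\sqrt{n+m}\,E$, the inclusion $\mathcal F_{n,m}^E\subset\mathcal F_{n,m}^{\mathcal W}$, the ellipsoid estimates of Propositions~\ref{codim-1}--\ref{codim-2}, and the identity $P_\Phi(W^*)=n|W^*|$ from Remark~\ref{remark: projection=restriction wulff}. Your re-derivation of the $m\ge3$ shell estimate by transplanting the spherical profile under the diagonal map and bounding the $m$-plane Jacobian by $a_1\cdots a_m$ is a clean equivalent of the paper's direct ellipsoidal-shell computation in Proposition~\ref{codim-2}, and the minor remark that the central slice maximizes ``precisely when $m\ge3$'' should read ``for $m\ge2$'' (the $m=2$ slices are all equal), but this does not affect the argument since you only invoke it for $m\ge3$.
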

\begin{remark}\label{Fc}
It would be very interesting to improve the constant $c_{n, m}$ in \eqref{nonsharp-wulff-isop} to some constant independent of $m$ and $n$, which is expected to be 1. Another interesting direction is to prove sharp minimal submanifolds Wulff inequality for more general class of weights or the associated convex bodies.
\end{remark}
The paper is organized as follows: in Section 2, we prove Theorem \ref{aniso-peri}; in Section 3, we prove Theorem \ref{thm: sharp cases}; and in Section 4, we prove Theorem \ref{thm: nonsharp codim 1}.

\textbf{Acknowledgments.}
The authors appreciate the funding and research environment support from MIT, UCSD, and ITS Westlake University, respectively.

\section{Basic Convex Geometry}
In this section, we first recall some basic properties of convex geometry that will be used in later sections.

Let $P$ be an $n$-dimensional subspace passing through the origin. We now consider the following two sets in $P$:
\begin{enumerate}
    \item $W_P$ is the Wulff shape induced by $\Phi|_P$,
    \item $\text{proj}_P\mathcal{W}$ is the projection of $\mathcal{W}$ onto $P$;
\end{enumerate}
The following lemma shows that the two objects are the same.
\begin{lemma}\label{proj-wulff}
    If $\Phi$ is the support function of $\mathcal{W}$, then $W_P = \text{proj}_P\mathcal{W}$, and $\Phi|_P$ is the support function of $W_P$ in $P$.
\end{lemma}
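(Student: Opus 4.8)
The plan is to identify both $W_P$ and $\text{proj}_P\mathcal{W}$ as the Wulff shape, inside $P$, of one and the same one-homogeneous function, namely $\Phi|_P$. The key observation is that the support function of the projected body $\text{proj}_P\mathcal{W}$, computed as a convex body in $P$, is precisely the restriction $\Phi|_P$. Indeed, since $P$ passes through the origin, orthogonal projection onto $P$ is self-adjoint and fixes every $\nu\in P$, so for $\nu\in P$ with $|\nu|=1$ one has
\begin{equation*}
\sup_{y\in\text{proj}_P\mathcal{W}} y\cdot\nu \;=\; \sup_{x\in\mathcal{W}}(\text{proj}_P x)\cdot\nu \;=\; \sup_{x\in\mathcal{W}} x\cdot(\text{proj}_P\nu) \;=\; \sup_{x\in\mathcal{W}} x\cdot\nu \;=\; \Phi(\nu).
\end{equation*}
Thus $\Phi|_P$ is the support function of $\text{proj}_P\mathcal{W}$ in $P$, which already yields the second assertion of the lemma once we know $W_P=\text{proj}_P\mathcal{W}$.

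For the equality of the two sets I would argue by a double inclusion in $P$. The inclusion $\text{proj}_P\mathcal{W}\subseteq W_P$ is immediate from the definition of $W_P$: if $y=\text{proj}_P x$ with $x\in\mathcal{W}$, then for every unit vector $\nu\in P$ we get $y\cdot\nu=x\cdot\nu\leq\Phi(\nu)$, so $y$ satisfies all the defining inequalities of $W_P$. For the reverse inclusion, note that $\text{proj}_P\mathcal{W}$ is a compact convex subset of $P$, being the image of the compact convex set $\mathcal{W}$ under a linear map; if $y\in P$ does not lie in it, the separating hyperplane theorem applied within $P$ produces a unit vector $\nu\in P$ with $y\cdot\nu>\sup_{z\in\text{proj}_P\mathcal{W}} z\cdot\nu=\Phi(\nu)$, so $y\notin W_P$. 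Hence $W_P\subseteq\text{proj}_P\mathcal{W}$, and the two sets coincide. Alternatively, once $\Phi|_P$ has been identified as the support function of the convex body $\text{proj}_P\mathcal{W}$, one may simply invoke the fact recalled in the footnote after the definition of the Wulff shape, that the Wulff shape generated by the support function of a convex body is that body itself.

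There is no serious obstacle here; the only points requiring a little care are bookkeeping ones. First, one should make sure the Wulff shape of $\Phi|_P$ is taken with respect to unit vectors lying in $P$ (equivalently, $S^{n+m-1}\cap P$), not all of $S^{n+m-1}$ — this is exactly what makes the inclusion $\text{proj}_P\mathcal{W}\subseteq W_P$ go through, since nothing is demanded of $y\cdot\nu$ for directions $\nu\notin P$. Second, to invoke the mutual-determination fact one needs $\text{proj}_P\mathcal{W}$ to be a genuine convex body in $P$, i.e. to have nonempty interior relative to $P$; this holds because $\mathcal{W}$, being a convex body in $\mathbb{R}^{n+m}$, has nonempty interior, and the image of an open set under $\text{proj}_P$ is open in $P$. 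With these remarks the proof is complete.
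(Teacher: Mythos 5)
Your proof is correct and follows essentially the same route as the paper: the central computation showing $\Phi|_P$ is the support function of $\text{proj}_P\mathcal{W}$ is the same, and your alternative closing remark invoking the mutual determination of support functions and Wulff shapes is precisely the paper's argument. The explicit double-inclusion via the separating hyperplane theorem is just an elementary unpacking of that same cited fact.
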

\begin{proof}


    It suffices to show that $\Phi|_P$ is the support function of $\text{proj}_P\mathcal{W}$, since support functions and Wulff shapes uniquely determine each other (see \cite[page 2976]{cabre2016sharp}). For any $\nu \in P\cap S^{n+m-1}$ and $x \in \mathcal{W}$, we have $x \cdot \nu = \text{proj}_Px \cdot \nu$, hence
    \begin{equation}
        \Phi|_P(\nu) = \sup\{x \cdot \nu: x\in \mathcal{W}\} = \sup\{\text{proj}_Px \cdot \nu: x \in \mathcal{W}\} = \sup\{y \cdot \nu: y \in \text{proj}_P\mathcal{W}\}.
    \end{equation}
\end{proof}

\begin{remark}\label{remark: projection=restriction wulff}
    Let $P$ be any $n$-dimensional affine subspace such that $|\text{proj}_P\mathcal{W}|$ is minimized, and let $P'$ be the $n$-dimensional subspace parallel to $P$ and passing through the origin. As a consequence of the lemma, $\Phi|_{P'}$ is also the support function of $\text{proj}_P\mathcal{W}$ in $P$, so
    \begin{align}\label{Wulff-const}
        P_{\Phi}(\text{proj}_P\mathcal{W}) &= \int_{\partial \text{proj}_P\mathcal{W}}\Phi|_{P'}(\nu(x))dS \notag \\
        &= \int_{\partial \text{proj}_P\mathcal{W}}x \cdot \nu(x)dS \notag \\
        &= \int_{\text{proj}_P\mathcal{W}}\text{div}(x)dx \notag \\
        &=n|\text{proj}_P\mathcal{W}|,
    \end{align}
    where we used the fact that for almost every $x \in \partial \text{proj}_P\mathcal{W}$, the unit outward normal $\nu(x)$ exists, and $x \cdot \nu(x) = \Phi|_{P'}(x)$ because it $\Phi|_{P'}$ is the support function. According to \eqref{Wulff-const}, for any $P_1$ and $P_2$ such that both $|\text{proj}_{P_1}\mathcal{W}|$ and $|\text{proj}_{P_2}\mathcal{W}|$ are minimized, it follows that
    \begin{align}
        \frac{P_{\Phi}(\text{proj}_{P_1}\mathcal{W})}{|\text{proj}_{P_1}\mathcal{W}|^{\frac{n-1}{n}}} = \frac{P_{\Phi}(\text{proj}_{P_2}\mathcal{W})}{|\text{proj}_{P_2}\mathcal{W}|^{\frac{n-1}{n}}}.
    \end{align}
    Thus, the right-hand sides of \eqref{eq: wulff inequality conjecture}, \eqref{wulff-isop} and \eqref{nonsharp-wulff-isop} are well defined for given $\mathcal{W}$.
\end{remark}

\section{Proof of Theorem \ref{aniso-peri}}
In this section, we prove Theorem \ref{aniso-peri}. Let $\Sigma\subset \mathbb{R}^{n+m}$ be a compact $n$-dimensional minimal submanifold with boundary $\partial \Sigma$. We first consider the special case that $\Sigma$ is connected with boundary $\partial \Sigma$. By solving $u : 
\Sigma\to \mathbb{R}$ from the following Poisson equation with Neumann boundary condition
\begin{align}\label{PDE neumann}
    \begin{cases}
    \Delta_{\Sigma} u=\frac{P_\Phi(\Sigma)}{|\Sigma|} & \text { in } \Sigma, \\ \langle\nabla^{\Sigma}u, \nu\rangle=\Phi(\nu) & \text { on } \partial \Sigma,\end{cases}
\end{align}
we can construct the map:
\begin{equation}
    T:N\Sigma \xrightarrow{} \mathbb{R}^{n+1}, (x, y) \mapsto \nabla^{\Sigma} u(x)+y
\end{equation}
where $N\Sigma$ is the normal bundle of $\Sigma$. As in \cite{Brendle2021}, we define the following set:
\begin{equation}
 A=\{(x, y)\in N(\Sigma\backslash \partial\Sigma): \det (D^{2}_{\Sigma} u-\langle \textrm{I\!I}_{\Sigma}(x), y \rangle)\geq 0\},
\end{equation}
where $\textrm{I\!I}_{\Sigma}$ is the second fundamental form of $\Sigma\subset \mathbb{R}^{n+m}$ 
\begin{lemma}\label{surjective}
    We have $T(A) \supset \mathcal{W}^{int}$, thus $T$ is a surjective map from $A$ onto the interior of the Wulff shape $\mathcal{W}$.
\end{lemma}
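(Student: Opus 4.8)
The plan is to prove surjectivity of $T$ onto $\mathcal{W}^{int}$ by a continuity/connectedness argument combined with an ABP-type barrier estimate, following the scheme of Brendle and Cabr\'e. Fix an arbitrary $\xi \in \mathcal{W}^{int}$; I want to produce $(x,y) \in A$ with $T(x,y) = \xi$. The idea is to consider the function $w : \Sigma \to \mathbb{R}$ defined by $w(z) = u(z) - \langle z, \xi\rangle$ (using the ambient inner product on $\mathbb{R}^{n+m}$ restricted to $\Sigma$), and to choose $x \in \Sigma$ to be a point where $w$ attains its minimum.

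First I would analyze the boundary behavior to show the minimum of $w$ is not attained on $\partial\Sigma$. At a boundary minimum point $x_0$, the outward normal derivative of $w$ along $\partial\Sigma$ would have to be $\le 0$, i.e. $\langle \nabla^\Sigma u(x_0), \nu\rangle \le \langle \nu, \xi\rangle$; but the Neumann condition gives $\langle\nabla^\Sigma u(x_0),\nu\rangle = \Phi(\nu)$, and since $\xi \in \mathcal{W}^{int}$ we have $\langle \nu,\xi\rangle < \Phi(\nu)$ strictly (by definition of the Wulff shape as the intersection of half-spaces $x\cdot\nu \le \Phi(\nu)$, with the interior being the strict inequalities), giving $\Phi(\nu) < \Phi(\nu)$, a contradiction. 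Hence the minimum is attained at an interior point $x \in \Sigma\setminus\partial\Sigma$. At such a point $\nabla^\Sigma w(x) = 0$, which says $\nabla^\Sigma u(x)$ is the tangential part of $\xi$; writing $\xi = \nabla^\Sigma u(x) + y$ with $y \in N_x\Sigma$ then gives $T(x,y) = \xi$, so it only remains to check $(x,y)\in A$.

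The remaining point — and the main obstacle — is to verify the nonnegativity of the relevant determinant, i.e. that $(x,y) \in A$. This is exactly where minimality of $\Sigma$ enters. At the interior minimum $x$ of $w$, the tangential Hessian of $w$ is positive semidefinite: $D^2_\Sigma w(x) \ge 0$. Computing this Hessian one finds $D^2_\Sigma w = D^2_\Sigma u - \langle \mathrm{I\!I}_\Sigma(x), \xi^\perp\rangle$ where $\xi^\perp$ is the normal component; since $\xi^\perp = y$ here, this reads $D^2_\Sigma u(x) - \langle \mathrm{I\!I}_\Sigma(x), y\rangle \ge 0$ as a symmetric bilinear form on $T_x\Sigma$, hence its determinant is $\ge 0$, which is precisely the condition defining $A$. (The computation that $D^2_\Sigma\langle z,\xi\rangle = \langle \mathrm{I\!I}_\Sigma, \xi^\perp\rangle$ uses that the second fundamental form measures the normal part of the ambient Hessian of a linear function; I would state this as a short lemma or cite it from \cite{Brendle2021}.) Therefore $(x,y)\in A$ and $T(x,y) = \xi$, proving $\mathcal{W}^{int} \subset T(A)$.

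Finally I would remark on the role of connectedness: the argument as stated works verbatim for connected $\Sigma$ since the solvability of \eqref{PDE neumann} (with the compatibility condition $\int_\Sigma \Delta_\Sigma u = \int_{\partial\Sigma}\Phi(\nu)$, which holds by the choice of right-hand side $P_\Phi(\Sigma)/|\Sigma|$) is what produces $u$; for a general compact $\Sigma$ with several components one applies the connected case on a component and takes unions, noting $A$ and $T$ split accordingly. I expect no difficulty here beyond bookkeeping. The only genuinely delicate checks are the strict inequality $\langle \nu,\xi\rangle < \Phi(\nu)$ at the boundary (clean, from $\xi\in\mathcal{W}^{int}$) and the Hessian-of-linear-function identity invoking $\mathrm{I\!I}_\Sigma$; both are routine but must be done carefully to land exactly on the definition of $A$.
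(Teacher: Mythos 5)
Your proposal is correct and follows essentially the same route as the paper: minimize $w = u - \langle\cdot,\xi\rangle$, use the Neumann condition together with $\xi\in\mathcal{W}^{int}$ to exclude a boundary minimum, read $\xi = \nabla^\Sigma u(x)+y$ from the vanishing tangential gradient, and use positive semidefiniteness of $D^2_\Sigma w$ at the interior minimum to verify $(x,y)\in A$. The only difference is presentational — you spell out the boundary contradiction and flag the Hessian-of-a-linear-function identity as a lemma, whereas the paper asserts both directly — but the argument is the same.
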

\begin{proof}
    Let $\xi \in \mathcal{W}^{int}$ and we consider the map $w: \Sigma \xrightarrow{} \mathbb{R}$ defined by
    \begin{align}
        w(x) = u(x)-\langle x, \xi \rangle.
    \end{align}
    Denote $\nu(x)$ the unit outward normal vector on the boundary $ \partial \Sigma$ relative to $\Sigma$. Then, for any $x \in \partial \Sigma$,
    \begin{equation}
        \langle \nabla^{\Sigma}w(x), \nu(x)\rangle = \langle \nabla^{\Sigma}u(x), \nu(x)\rangle - \langle \xi, \nu(x)\rangle = \Phi(\nu) - \langle \xi, \nu(x)\rangle > 0.
    \end{equation}
    Thus $w$ attains its minimum at some interior point $\Bar{x}$. It follows that $\nabla^{\Sigma}w(\Bar{x}) = 0$, which implies 
    \begin{align}\label{xi-equ}
        \xi = \nabla^{\Sigma}u(\Bar{x})+y
    \end{align}
    for some $y \in N_{\Bar{x}}\Sigma$. It remains  to compute the determinant. Since $w$ achieves its minimum, we have
    \begin{equation}
        0 \leq D^2_{\Sigma}w(\Bar{x}) = D^2_{\Sigma}u(x) - \langle \textrm{I\!I}_{\Sigma}(\Bar{x}), \xi\rangle = D^2_{\Sigma}u(x) - \langle \textrm{I\!I}_{\Sigma}(\Bar{x}), y\rangle,
    \end{equation}
    where the last equality holds from \eqref{xi-equ}.
    Therefore, we obtain $T(\Bar{x}, y) = \xi$ and $(\Bar{x}, y) \in A$.
\end{proof}

\begin{lemma}\label{det-T}
    For all $(x, y) \in A$, the determinant of transport map $T$ satisfies
    \begin{equation}
        0 \leq \det DT(x, y) \leq \left(\frac{P_\Phi(\Sigma)}{n|\Sigma|}\right)^n.
    \end{equation}
\end{lemma}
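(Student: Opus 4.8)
The plan is to follow the ABP/mass-transport scheme of Brendle, computing $DT$ in a local orthonormal frame and estimating its determinant by the arithmetic–geometric mean inequality. First I would fix $(x,y)\in A$, choose geodesic normal coordinates $\{e_1,\dots,e_n\}$ for $\Sigma$ at $x$ and an orthonormal frame $\{e_{n+1},\dots,e_{n+m}\}$ for $N_x\Sigma$, and differentiate the map $T(x,y)=\nabla^\Sigma u(x)+y$ along these $n+m$ directions. Differentiating in the tangential directions produces the block $D^2_\Sigma u(x)-\langle \mathrm{I\!I}_\Sigma(x),y\rangle$ (the normal part of $y$ contributes through the Weingarten map exactly as in the minimal submanifold case), while differentiating in the normal directions, restricted back onto the $n$-dimensional target plane $T_xT(\cdot)$, contributes the remaining factors; the key point is that the full Jacobian of $T\colon N\Sigma\to\mathbb{R}^{n+m}$ (here $n+m=n+1$ since we are in the hypersurface case of Theorem \ref{aniso-peri}) has determinant equal to $\det\!\big(D^2_\Sigma u(x)-\langle \mathrm{I\!I}_\Sigma(x),y\rangle\big)$ up to the normal-bundle volume factor, which on $A$ is nonnegative by definition of $A$. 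This gives the left inequality $\det DT(x,y)\ge 0$ immediately.

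For the upper bound, on the set $A$ the symmetric matrix $M:=D^2_\Sigma u(x)-\langle \mathrm{I\!I}_\Sigma(x),y\rangle$ is positive semidefinite, so by AM–GM
\begin{equation}
\det M \le \left(\frac{\operatorname{tr} M}{n}\right)^{n}.
\end{equation}
Now $\operatorname{tr} M = \Delta_\Sigma u(x) - \langle H_\Sigma(x),y\rangle$ where $H_\Sigma$ is the mean curvature vector; since $\Sigma$ is minimal, $H_\Sigma\equiv 0$, so $\operatorname{tr} M = \Delta_\Sigma u(x) = \frac{P_\Phi(\Sigma)}{|\Sigma|}$ by the PDE \eqref{PDE neumann}. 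Substituting, $\det M\le\big(\frac{P_\Phi(\Sigma)}{n|\Sigma|}\big)^n$, and since $\det DT(x,y)=\det M$ (in the hypersurface case the normal bundle is a line and the normal-direction derivative of $T$ is just the unit normal itself, contributing a unit factor to the Jacobian), the claimed bound follows.

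The main obstacle—really the only subtle point—is the Jacobian computation: one must verify carefully that the derivative of $T$ in the normal direction(s), projected appropriately, contributes exactly a factor of $1$ (in codimension one) and more generally does not interfere with the tangential block, so that $\det DT(x,y)$ genuinely equals $\det M$. This requires writing $\nabla^\Sigma u(x)$ as a vector field in $\mathbb{R}^{n+m}$ and using that its covariant derivative differs from its ambient derivative by a second-fundamental-form term, together with the fact that for a normal vector field $y$ the ambient derivative $\partial_y$ along the fiber is the identity on the normal space. I would handle this exactly as in \cite{Brendle2021}, adapting only the boundary data to the anisotropic Neumann condition, which affects Lemma \ref{surjective} but not this Jacobian estimate. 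Once the frame is set up correctly, the remaining steps (AM–GM and substitution of the PDE) are routine.
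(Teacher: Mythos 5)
Your argument follows essentially the same route as the paper: identify $\det DT(x,y)$ with $\det\big(D^2_\Sigma u(x)-\langle\textrm{I\!I}_\Sigma(x),y\rangle\big)$ (the paper simply cites \cite[Lemma 5]{Brendle2021} for this Jacobian identity rather than sketching the frame computation), use the definition of $A$ for nonnegativity, and apply the arithmetic--geometric mean inequality together with minimality and \eqref{PDE neumann} for the upper bound. Note only that the lemma should be read in general codimension $m$, not just the hypersurface case (the words ``hypersurface'' and ``$\mathbb{R}^{n+1}$'' in the statement of Theorem \ref{aniso-peri} and in the definition of $T$ are evident typos, since the result is applied with $m=2$ and beyond), but since the block structure of $DT$ you describe — tangential block $D^2_\Sigma u-\langle\textrm{I\!I}_\Sigma,y\rangle$ against an identity normal--normal block — holds verbatim for any $m$, your argument goes through unchanged.
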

\begin{proof}
    As shown in \cite[Lemma 5]{Brendle2021}, one can show that 
    \begin{align}
        \det DT(x, y) = \det(D^{2}_{\Sigma} u-\langle \textrm{I\!I}_{\Sigma}(x), y \rangle).
    \end{align}
    By the definition of $A$, we have $\det(D^{2}_{\Sigma} u-\langle \textrm{I\!I}_{\Sigma}(x), y \rangle) \geq 0$. Thus, by applying the arithmetic-geometric mean inequality, the vanishing mean curvature property of minimal submanifolds, and \eqref{PDE neumann}, we obtain
    \begin{equation}
        \det(D^{2}_{\Sigma} u-\langle \textrm{I\!I}_{\Sigma}(x), y \rangle) \leq \left( \frac{\text{tr}(D^{2}_{\Sigma} u-\langle \textrm{I\!I}_{\Sigma}(x), y \rangle)}{n}\right)^n = \left(\frac{P_\Phi(\Sigma)}{n|\Sigma|}\right)^n.
    \end{equation}
\end{proof}

\begin{proof}[Proof of Theorem \ref{aniso-peri}]

We recall
\begin{equation}
   f\in  \mathcal{F}_{n,m} = \{f \in L^1(\mathbb{R}^{n+m}): \textrm{supp}(f)\subset\mathcal{W}, f\geq 0, \int_Pf \leq 1 \,\,\text{for}\,\,P\in \textrm{Graff}_m(\mathbb{R}^{n+m})\},
\end{equation}
and by Lemma \ref{surjective} and Lemma \ref{det-T}, we have
\begin{align}\label{eq: inequality1}
    \int_{\mathcal{W}} f(\xi) d\xi &\leq \int_{\Sigma}\int_{N_x\Sigma}f(T(x, y))|\det DT(x, y)|1_{A}(x, y)dy dx\\
    &\leq \left(\frac{P_\Phi(\Sigma)}{n|\Sigma|}\right)^n \int_{\Sigma}\int_{N_x\Sigma}f(\nabla^{\Sigma}u(x)+y) dy dx \notag\\
    &\leq \left(\frac{P_\Phi(\Sigma)}{n|\Sigma|}\right)^n |\Sigma| \notag\\
    &= \left(\frac{P_\Phi(\Sigma)}{n|\Sigma|^{\frac{n-1}{n}}}\right)^n.\notag
\end{align}
In particular, the above inequality gives
\begin{equation}
    \sup_{\mathcal{F}_{n,m}}\int_{\mathcal{W}} f \leq \left(\frac{P_\Phi(\Sigma)}{n|\Sigma|^{\frac{n-1}{n}}}\right)^n,
\end{equation}
which implies \eqref{opt-inequ}. It remains to consider the case where $\Sigma$ is disconnected. In that case, we apply the inequality to each individual connected component of $\Sigma$, sum over all connected components, and use the strict inequality
\begin{align}\label{component-inequ}
    a^{\frac{n-1}{n}}+b^{\frac{n-1}{n}}>a(a+b)^{-\frac{1}{n}}+b(a+b)^{-\frac{1}{n}}=(a+b)^{\frac{n-1}{n}}
\end{align}
for all $a, b>0$.

\end{proof}

\section{Proof of Theorem \ref{thm: sharp cases}}
In  this section, we prove
Theorem \ref{thm: sharp cases}. By Theorem \ref{aniso-peri} and \eqref{Wulff-const}, it suffices to show that there exists some $\tilde{f} \in \mathcal{F}_{n,m}$ such that 
\begin{align}\label{tilde-f}
    \int_{\mathcal{W}} \tilde{f} = |W^*|,
\end{align}
or there exists a sequence $f_\sigma \in \mathcal{F}_{n,m}$ such that 
\begin{align}\label{f-sigma}
    \lim_{\sigma\to 1}\int_{\mathcal{W}} f_\sigma = |W^*|.
\end{align}
In Proposition \ref{codim-1}, Proposition \ref{codim-2} and Proposition \ref{long-body}, we demonstrate that when $m = 1, 2$, equation \eqref{tilde-f} or \eqref{f-sigma} holds when $\mathcal{W}$ is a centrally symmetric $(n+m)$-dimensional ellipsoid and long convex body. In Proposition \ref{codim-2} we also obtain a weaker estimate for $m > 2$. Then in Proposition \ref{equality}, we discuss the sharpness and rigidity when equality holds for ellipsoids and long bodies and $m=1, 2$.

\begin{proposition}\label{codim-1}
   If codimension $m=1$ and $\mathcal{W} \in \mathcal{E}_{n+1}$,  there exists some $\tilde{f} \in \mathcal{F}_{n,1}$ such that \eqref{tilde-f} holds. Hence
   \begin{equation}
       \sup_{f \in \mathcal{F}_{n,1}}\int_{\mathcal{W}}f = |W^*|.
   \end{equation}
\end{proposition}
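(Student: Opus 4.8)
### Proof Proposal for Proposition \ref{codim-1}

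The goal is to exhibit a single competitor $\tilde f \in \mathcal F_{n,1}$ whose integral over $\mathcal W$ equals $|W^*|$; by the trivial upper bound \eqref{trivial-bound} this forces the supremum to be exactly $|W^*|$. Here $\mathcal W$ is an origin-centered ellipsoid in $\mathbb R^{n+1}$, so after an orthogonal change of coordinates we may assume $\mathcal W = \{\, \sum_{i=1}^{n+1} x_i^2/a_i^2 \le 1\,\}$ with $0 < a_1 \le a_2 \le \dots \le a_{n+1}$. In codimension one, the affine subspaces $P \in \mathrm{Graff}_1(\mathbb R^{n+1})$ are just the straight lines, so the constraint $\int_P f \le 1$ is a bound on every one-dimensional slice integral, and $W^*$ is the minimal-area projection, which for an ellipsoid is the projection along the longest axis $x_{n+1}$, i.e. $W^* = \{\sum_{i=1}^n x_i^2/a_i^2 \le 1\}$ sitting in the coordinate hyperplane $\{x_{n+1}=0\}$, with $|W^*| = |B^n| \prod_{i=1}^n a_i$.

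The natural candidate is to concentrate the mass of $f$ along the long direction: I would try $\tilde f(x) = g(x') \, \delta$-like profile in $x_{n+1}$, but since $f$ must be an honest $L^1$ function we instead take $\tilde f$ to be supported on $\mathcal W$ and constant-in-disguise along vertical lines. Concretely, set $\tilde f(x_1,\dots,x_{n+1}) = \dfrac{\mathbf 1_{\mathcal W}(x)}{\ell(x')}$ where $x' = (x_1,\dots,x_n)$ and $\ell(x') = 2 a_{n+1}\sqrt{1 - \sum_{i=1}^n x_i^2/a_i^2}$ is the length of the vertical chord of $\mathcal W$ over $x'$ (defined for $x'$ in the open ellipsoid $(W^*)^{\mathrm{int}}$). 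Then the vertical-line constraint is saturated: $\int_{\text{vertical line over }x'} \tilde f = \ell(x')\cdot \frac{1}{\ell(x')} = 1$. And by Fubini, $\int_{\mathcal W} \tilde f = \int_{(W^*)^{\mathrm{int}}} 1 \, dx' = |W^*|$, giving exactly \eqref{tilde-f}. The nontrivial part is verifying that $\int_P \tilde f \le 1$ for \emph{all} lines $P$, not just vertical ones. For this I would argue as follows: parametrize a non-vertical line $P$ by its projection, so $\int_P \tilde f = \int_P \frac{\mathbf 1_{\mathcal W}}{\ell \circ \mathrm{proj}} \, ds$; along $P$ the quantity $\ell(\mathrm{proj}(x(s)))$ is, up to the fixed Jacobian factor relating arclength on $P$ to arclength of its horizontal shadow, bounded below by the length of $P \cap \mathcal W$ itself — because the vertical chord through any point of $P\cap\mathcal W$ is at least as long as... actually one must be more careful. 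The clean way: a vertical chord of an ellipsoid over $x'$ has length $\ell(x')$, while the chord of $\mathcal W$ cut out by a line $P$ through a point with horizontal coordinate $x'$ has length at most $\ell(x')/\cos\theta$ where $\theta$ is the angle between $P$ and the horizontal — but $ds = dx'_{\text{along }P}/\cos\theta$ as well, so these factors cancel and $\int_P \tilde f \le (\text{horizontal length of }P\text{'s shadow})\cdot \sup (1/\ell)$, which is not obviously $\le 1$.

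Hence the main obstacle, and where I would spend the real effort, is the uniform slice bound. I expect the correct approach is to use the affine invariance of the problem: an invertible linear map $L$ taking the ball $B^{n+1}$ to $\mathcal W$ transforms lines to lines, transforms the uniform-over-vertical-chords density for the ball to that for $\mathcal W$ (up to the constant Jacobian of $L$, which is absorbed), and transforms $\int_P f$ by a factor depending only on the direction of $P$; but crucially it does \emph{not} preserve "verticality" unless $L$ is block-diagonal, which here it is ($\mathcal W$ is axis-aligned). So reduce to $\mathcal W = B^{n+1}$: one must show that for $f_0(x) = \mathbf 1_{B^{n+1}}(x)/(2\sqrt{1-|x'|^2})$, every line integral $\int_P f_0 \le 1$. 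A chord of $B^{n+1}$ along direction $v$ starting from the boundary, parametrized by arclength $s \in [0,L]$, has $|x'(s)|^2 = $ a quadratic in $s$, and $1-|x(s)|^2 = s(L-s)$ (chord-power relation), so $1 - |x'(s)|^2 = s(L-s) + x_{n+1}(s)^2 \ge s(L-s)$, whence $f_0(x(s)) \le \tfrac12 (s(L-s))^{-1/2}$ and $\int_0^L f_0 \, ds \le \tfrac12 \int_0^L (s(L-s))^{-1/2} ds = \tfrac{\pi}{2}$. That gives only $\pi/2$, not $1$ — so a cruder density won't do, and I would instead refine the vertical profile, e.g. replace $1/\ell$ by a profile that is a function of both $x'$ and the vertical position in a way that still integrates to $1$ on vertical chords but decays enough to control slanted ones; alternatively, accept a sequence $f_\sigma$ as in \eqref{f-sigma} by multiplying by a cutoff that pushes mass toward the "equatorial" region $x_{n+1} \approx 0$ where the ellipsoid is fat, making slanted chords contribute less in the limit. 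Verifying that such a limiting construction still obeys all slice constraints while its integral tends to $|W^*|$ is, I expect, the technical heart of the proof; the author's actual argument presumably does exactly this, designing the profile on the ellipsoid explicitly so that the extremal line integral is along the vertical direction.
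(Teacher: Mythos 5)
Your initial candidate $\tilde f(x)=\mathbf 1_{\mathcal W}(x)/\ell(x')$, constant on each vertical chord and normalized to integrate to $1$ there, is \emph{not} in $\mathcal F_{n,1}$: already for the ball $B^{n+1}$ its integral along the horizontal diameter is $\int_{-1}^{1}\frac{dx_1}{2\sqrt{1-x_1^2}}=\pi/2>1$. Your own chord--power computation shows this (the inequality $1-|x'(s)|^2\ge s(L-s)$ is an \emph{equality} on equatorial chords), so $\pi/2$ is not a slack upper bound but an actual violation of the slice constraint. You correctly diagnose the issue and even say the fix should be a profile depending on both $x'$ and the vertical coordinate, but you leave this step --- which you yourself call the technical heart --- unresolved, and you drift toward a limiting sequence $f_\sigma$ which the statement does not require in codimension one.

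The paper's construction supplies precisely the missing profile, and it is the key idea absent from your write-up. With $\Lambda=\mathrm{diag}(\lambda_1^{-1},\dots,\lambda_{n+1}^{-1})$ and $\lambda_1\le\cdots\le\lambda_{n+1}$, set
\begin{equation*}
\tilde f(x)=\frac{1}{\pi\lambda_{n+1}}\,\frac{\mathbf 1_{\mathcal W}(x)}{\sqrt{1-|\Lambda x|^2}}.
\end{equation*}
The decisive property of the unnormalized density $(1-|\Lambda x|^2)^{-1/2}$ is that its integral over \emph{any} chord of $\mathcal W$ depends only on the chord's direction $\boldsymbol\alpha$, not its position: writing $1-|\Lambda(t\boldsymbol\alpha+\boldsymbol\omega)|^2=-at^2+bt+c$ with $a=|\Lambda\boldsymbol\alpha|^2$, the integral between the two roots is exactly $\pi/\sqrt a=\pi/|\Lambda\boldsymbol\alpha|$. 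Since $|\Lambda\boldsymbol\alpha|\ge\lambda_{n+1}^{-1}$ with equality only for $\boldsymbol\alpha=\pm e_{n+1}$, the normalization by $\pi\lambda_{n+1}$ makes every line integral $\le 1$, saturated exactly on vertical chords; Fubini then gives $\int_{\mathcal W}\tilde f=|\mathrm{proj}_{\{x_{n+1}=0\}}\mathcal W|=|W^*|$. Your speculative affine reduction to the ball does go through, but only with the density $(1-|x|^2)^{-1/2}/\pi$ on the ball (whose chord integral is identically $1$), not with $1/(2\sqrt{1-|x'|^2})$; without identifying a density whose chord integral is position-independent, there is no clear way to close the argument.
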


\begin{proof}
Suppose $\mathcal{W}$ is an ellipsoid given by
\begin{equation}
    \frac{x_1^2}{\lambda_1^2}+...+\frac{x_{n+1}^2}{\lambda_{n+1}^2} \leq 1
\end{equation}
where $0 < \lambda_1 \leq ... \leq \lambda_{n+1}$. Denote
\begin{align}
    \Lambda = 
    \begin{pmatrix}
        \lambda_1^{-1} & & & & \\
         & \lambda_2^{-1} & & & \\
         & & \ddots & &\\
         & & & & \lambda_{n+1}^{-1} \\
    \end{pmatrix}.
\end{align}
Let the function $f(x)$ be defined by
\begin{equation}
    f(x) = \frac{1}{\sqrt{1- |\Lambda x|^2}}1_{\mathcal{W}}(x).
\end{equation}
Consider a line $l(t) = t\boldsymbol{\alpha} + \boldsymbol{\omega}$ that passes through the ellipsoid along unit vector $\boldsymbol{\alpha}$ direction. Then the end points $T_i\boldsymbol{\alpha}+\boldsymbol{\omega}$ ($i=1, 2$) of the chord in the ellipsoid satisfies
\begin{equation}
    |\Lambda(T_i\boldsymbol{\alpha}+\boldsymbol{\omega})|^2 = 1, \quad i=1, 2.
\end{equation}
This is a quadratic equation
\begin{equation}
    -aT_i^2+bT_i+c = 0, \quad i=1, 2,
\end{equation}
where
\begin{equation}
    a = |\Lambda\boldsymbol{\alpha}|^2, \quad b = -2\langle \Lambda\boldsymbol{\alpha}, \boldsymbol{\omega}\rangle, \quad c = 1-|\Lambda\boldsymbol{\omega}|^2.
\end{equation}
Upon solving this, we obtain
\begin{equation}
    (\frac{2aT_i-b}{\sqrt{b^2+4ac}})^2 = 1.
\end{equation}
Thus, the integral over the chord is given by
\begin{align}
    &\int_{T_1}^{T_2}\frac{1}{\sqrt{1 - |\Lambda(t\boldsymbol{\alpha} + \boldsymbol{\omega})|^2}}dt \notag\\
    &=\int_{T_1}^{T_2}\frac{1}{\sqrt{-at^2+bt+c}}dt\notag\\
    &=\left. \frac{1}{\sqrt{a}}\arcsin(\frac{2at-b}{\sqrt{b^2+4ac}})\right|_{T_1}^{T_2} \notag\\
    &= \frac{\pi}{\sqrt{a}}.
\end{align}
We notice that
\begin{equation}
    \frac{\pi}{\sqrt{a}} = \frac{\pi}{|\Lambda\boldsymbol{\alpha}|} \leq \lambda_{n+1}\pi,
\end{equation}
and the equality holds if and only if $\boldsymbol{\alpha} = (0,...,0, 1)$, that is the line is parallel to the longest axis. We now define the function $\tilde f$ as
\begin{equation}\label{eq: codim1 ellipsoid}
    \tilde{f} = \frac{f}{\pi\lambda_{n+1}}\geq 0.
\end{equation}
It satisfies its integral over any line that passes through the ellipsoid is at most 1, and achieves 1 when the line is parallel to the longest direction of ellipsoid, that is the axis corresponding to $\lambda_{n+1}$. Moreover, Fubini's Theorem gives
\begin{equation}
    \int_{\mathcal{W}} \tilde{f} = \int_{\text{proj}_{\{x_{n+1}=0\}}\mathcal{W}}\int_{\gamma_{x_{n+1}}} \tilde{f}(x)dx'dx_{n+1} = |\text{proj}_{\{x_{n+1}=0\}}\mathcal{W}| = |W^*|.
\end{equation}
Thus, $\tilde{f}\in \mathcal{F}_{n,1}$ and satisfies \eqref{tilde-f}, and we have completed the proof.
\end{proof}

For codimension $m\geq 2$ cases, we have following proposition.

\begin{proposition}\label{codim-2}
   If codimension $m = 2$ and $\mathcal{W} \in \mathcal{E}_{n+m}$, then there exists a sequence of functions $f_\sigma \in \mathcal{F}_{n,m}$ such that \eqref{f-sigma} holds. Hence when $m = 2$,
   \begin{equation}
       \sup_{f \in \mathcal{F}_{n,2}}\int_{\mathcal{W}}f = |W^*|.
   \end{equation}
   If codimension $m > 2$ and $\mathcal{W} \in \mathcal{E}_{n+m}$, then there exists a sequence of functions $f_\sigma \in \mathcal{F}_{n,m}$ such that
   \begin{equation}
       \sup_{f \in \mathcal{F}_{n,m}}\int_{\mathcal{W}}f \geq \lim_{\sigma\to 1}\int_{\mathcal{W}}f_{\sigma} = \frac{(n+m)|B^{n+m}|}{m|B^m||B^n|}|W^*|.
   \end{equation}
\end{proposition}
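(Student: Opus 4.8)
The plan is to produce, for each centrally symmetric ellipsoid $\mathcal{W}\in\mathcal{E}_{n+m}$, an explicit one-parameter family of admissible densities $f_\beta\in\mathcal{F}_{n,m}$ and to read off $\lim_\beta\int_{\mathcal{W}}f_\beta$. Since every quantity in sight is rotation invariant, I first normalize $\mathcal{W}=\{x\in\mathbb{R}^{n+m}:\sum_i x_i^2/\lambda_i^2\le 1\}$ with $0<\lambda_1\le\cdots\le\lambda_{n+m}$ and put $\Lambda=\mathrm{diag}(\lambda_1^{-1},\dots,\lambda_{n+m}^{-1})$, so that $\Lambda\mathcal{W}=B^{n+m}$. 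I also record that $|W^*|=|B^n|\,\lambda_1\cdots\lambda_n$: indeed the projection of $\mathcal{W}$ onto the $n$-subspace spanned by an orthonormal frame $U$ is an ellipsoid of volume $|B^n|\sqrt{\det\!\big(U^{T}\mathrm{diag}(\lambda_i^2)\,U\big)}$, and by eigenvalue interlacing (the Poincar\'e separation theorem) this determinant is minimized, equal to $\lambda_1^2\cdots\lambda_n^2$, exactly when $U$ spans the $n$ shortest axes.

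For $\beta\in(-1,0)$, let $g_\beta(z)=(1-|z|^2)^\beta$ on $B^{n+m}$, set $K_\beta:=\sup\{\int_Q g_\beta:\ Q\text{ an }m\text{-dimensional affine subspace}\}$, and define
\[
f_\beta(x):=\frac{g_\beta(\Lambda x)}{K_\beta\,\lambda_{n+1}\cdots\lambda_{n+m}}\,\mathbf{1}_{\mathcal{W}}(x).
\]
A one-line slicing computation on the ball shows that an $m$-plane at distance $d$ from the origin meets $B^{n+m}$ in an $m$-ball of radius $R=\sqrt{1-d^2}$ on which $1-|z|^2=R^2-|w|^2$, hence $\int_Q g_\beta=m|B^m|\,R^{2\beta+m}\int_0^1(1-u^2)^\beta u^{m-1}\,du$; since $2\beta+m>0$ for every $m\ge 2$ and $\beta>-1$, the supremum is finite and attained at $d=0$, giving $K_\beta=\tfrac{m|B^m|}{2}\,\mathrm{B}\!\big(\tfrac m2,\beta+1\big)$ with $\mathrm{B}$ the Beta function. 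Note that $f_\beta\in L^1(\mathbb{R}^{n+m})$ is precisely the condition $\beta>-1$.

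The crux is to check $f_\beta\in\mathcal{F}_{n,m}$. For an $m$-plane $P$ parametrized by an affine isometry $t\mapsto p_0+Vt$ with $V^{T}V=I_m$, the substitution $z=\Lambda(p_0+Vt)$ identifies $\int_P f_\beta$ with an integral over the $m$-plane $\Lambda P$, the Jacobian of $\Lambda|_P$ being the constant $\sqrt{\det(V^{T}\Lambda^2V)}$; thus $\int_P f_\beta=\big(K_\beta\,\lambda_{n+1}\cdots\lambda_{n+m}\big)^{-1}\big(\det(V^{T}\Lambda^2V)\big)^{-1/2}\int_{\Lambda P}g_\beta$. Now $\int_{\Lambda P}g_\beta\le K_\beta$ by definition, while interlacing forces the $m$ eigenvalues of $V^{T}\Lambda^2V$ to dominate the $m$ smallest eigenvalues $\lambda_{n+1}^{-2},\dots,\lambda_{n+m}^{-2}$ of $\Lambda^2$, so $\det(V^{T}\Lambda^2V)\ge(\lambda_{n+1}\cdots\lambda_{n+m})^{-2}$; combining the two bounds gives $\int_P f_\beta\le 1$. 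This reconciliation of the isotropic slicing bound for $g_\beta$ with the anisotropic rescaling $\Lambda$ — which is where the normalizing constant $\big(K_\beta\,\lambda_{n+1}\cdots\lambda_{n+m}\big)^{-1}$ gets forced — is the step I expect to demand the most care.

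Finally, changing variables $z=\Lambda x$ and inserting the Beta-function values (with $\mathrm{B}(a,b)=\Gamma(a)\Gamma(b)/\Gamma(a+b)$),
\[
\int_{\mathcal{W}}f_\beta=\frac{\lambda_1\cdots\lambda_{n+m}}{K_\beta\,\lambda_{n+1}\cdots\lambda_{n+m}}\int_{B^{n+m}}g_\beta=\lambda_1\cdots\lambda_n\cdot\frac{(n+m)|B^{n+m}|}{m|B^m|}\cdot\frac{\Gamma(\tfrac{n+m}{2})\,\Gamma(\tfrac m2+\beta+1)}{\Gamma(\tfrac m2)\,\Gamma(\tfrac{n+m}{2}+\beta+1)}.
\]
Letting $\beta\to-1^+$ the Gamma quotient tends to $1$, so $\lim_{\beta\to-1^+}\int_{\mathcal{W}}f_\beta=\lambda_1\cdots\lambda_n\cdot\tfrac{(n+m)|B^{n+m}|}{m|B^m|}=\tfrac{(n+m)|B^{n+m}|}{m|B^m|\,|B^n|}\,|W^*|$; reparametrizing by $\sigma\to1$ this is the family and limit claimed for $m>2$, and $\sup_{\mathcal{F}_{n,m}}\int_{\mathcal{W}}f\ge\lim_\beta\int_{\mathcal{W}}f_\beta$ is automatic. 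When $m=2$ the elementary identity $(n+2)|B^{n+2}|=2\pi|B^n|=2|B^2||B^n|$ collapses the constant to $1$, so $\lim_\sigma\int_{\mathcal{W}}f_\sigma=|W^*|$ and, combined with the trivial bound \eqref{trivial-bound}, $\sup_{\mathcal{F}_{n,2}}\int_{\mathcal{W}}f=|W^*|$; for $m>2$ one has $\tfrac{(n+m)|B^{n+m}|}{m|B^m||B^n|}<1$, which is the weaker estimate.
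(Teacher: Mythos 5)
Your proof is correct, and every step I checked goes through: the identification of $|W^*|$ via Poincar\'e separation, the slicing formula on the ball, the admissibility bound combining $\int_{\Lambda P}g_\beta\le K_\beta$ with $\det(V^T\Lambda^2 V)\ge(\lambda_{n+1}\cdots\lambda_{n+m})^{-2}$, and the Beta/Gamma computation for the limit. However, you take a genuinely different route from the paper. The paper's $f_\sigma$ is the normalized indicator of a thin ellipsoidal shell $U_\sigma=E\setminus\sigma E$, and the admissibility is checked by an explicit formula for the intersection $|U_\sigma\cap P|$ as a shell of lower-dimensional homothetic ellipsoids. Your densities $f_\beta\propto(1-|\Lambda x|^2)^\beta$ are smooth on the interior and supported on all of $\mathcal{W}$, and the admissibility comes from the slicing identity for $(1-|z|^2)^\beta$ on the ball together with an interlacing lower bound for $\det(V^T\Lambda^2V)$. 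Your route has the appeal of unifying the codimension cases: taking $\beta=-1/2$, $m=1$ recovers exactly the function $\tilde f$ the paper uses for Proposition \ref{codim-1}, so one family $g_\beta$ handles both $m=1$ and $m\ge 2$. One thing to be aware of, though it does not affect the correctness of Proposition \ref{codim-2} itself: the paper later uses in Proposition \ref{equality} that the supports of $f_\sigma$ shrink onto the boundary ellipsoid $S=\bigcap_\sigma\mathrm{supp}(f_\sigma)$, and this fails for your $f_\beta$ (whose support is all of $\mathcal{W}$ for every $\beta$). So if your family were substituted wholesale, the rigidity argument would need to be reformulated in terms of where the mass of $f_\beta$ concentrates rather than where it is supported.
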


\begin{proof}
Denote $\mathcal{W} = E_{\lambda_1, \lambda_2,\cdots,\lambda_{n+m}}$ the $(n+m)$-dimensional ellipsoid
\begin{align}
    \frac{x_1^2}{\lambda_1^2}+\frac{x_2^2}{\lambda_2^2}+\cdots+\frac{x_{n+m}^2}{\lambda_{n+m}^2} \leq 1,
\end{align}
where $\lambda_1\leq \lambda_2 \leq \cdots \leq \lambda_{n+m}$, and $U_{\sigma} = E_{\lambda_1,\lambda_2,\cdots,\lambda_{n+m}}\setminus E_{\sigma\lambda_1,\sigma\lambda_2,\cdots,\sigma\lambda_{n+m}}$ for some $0< \sigma < 1$.

Let  $P = \boldsymbol{q}+t_1 \boldsymbol{r_1}+t_2 \boldsymbol{r_2} + \cdots + t_{m} \boldsymbol{r_m}$ be an $m$-dimensional affine subspace which intersects with $U_{\sigma}$, where $\{\boldsymbol{r_j}\}_{j=1}^m$ are orthonormal. The intersection region $U_{\sigma} \cap P$ is the shell of two $m$-dimensional homothetic ellipsoids (if $P$ is very close to the boundary, it would be fully outside of $E_{\sigma\lambda_1,\cdots,\sigma\lambda_{n+m}}$, so in that case $U_{\sigma} \cap P = E_{\lambda_1,\cdots,\lambda_{n+m}} \cap P$ is just an ellipsoid). Denote
\begin{align}
    \Lambda = 
    \begin{pmatrix}
        \lambda_1^{-1} & & & & \\
         & \lambda_2^{-1} & & & \\
         & & \ddots & &\\
         & & & & \lambda_{n+m}^{-1} \\
    \end{pmatrix}.
\end{align}
Note that $\Lambda(E_{\lambda_1,\cdots,\lambda_{n+m}}) = B^{n+m}$, so we can study the intersection between ellipsoid and lower dimensional plane by converting it to an intersection between ball and plane via $\Lambda$, and converting everything back in the end by applying appropriate scaling. (For a detailed computation for $m=1$ and $n=2$, see \cite{klein2012ellipsoid}.)

Specifically, the above computation yields that the length of the $i$-th axis of the outer ellipsoid $P \cap E_{\lambda_1,\cdots,\lambda_{n+m}}$ is
\begin{align}
  \frac{\sqrt{1-d}}{|\Lambda \boldsymbol{r_i}|},\quad 1\leq i\leq m,
\end{align}
where $d = |\Lambda \boldsymbol{q}|^2 - \frac{\langle\Lambda \boldsymbol{q}, \Lambda \boldsymbol{r_1}\rangle^2}{|\Lambda \boldsymbol{r_1}|^2} - \frac{\langle\Lambda \boldsymbol{q}, \Lambda \boldsymbol{r_2}\rangle^2}{|\Lambda \boldsymbol{r_2}|^2} - \cdots - \frac{\langle\Lambda \boldsymbol{q}, \Lambda \boldsymbol{r_m}\rangle^2}{|\Lambda \boldsymbol{r_m}|^2}$ is the squared distance between origin and $\Lambda P$. The length of $i$-th axis of inner ellipsoid $P \cap E_{\sigma \lambda_1,\cdots,\sigma\lambda_{n+m}}$ is
\begin{align}
  \frac{\sqrt{\sigma^2-d}}{|\Lambda \boldsymbol{r_i}|},\quad 1\leq i\leq m.
\end{align}
As discussed before, $U_{\sigma} \cap P$ is the shell formed by two homothetic ellipsoids only when $P$ cuts through $E_{\sigma\lambda_1,\cdots,\sigma\lambda_{n+m}}$, which is the same as $\Lambda P$ cuts through $B^n(\sigma)$, or equivalently $d \leq \sigma^2$. Hence the intersection volume can be expressed as
\begin{equation}
    |U_{\sigma} \cap P| = |B^m|[(1-d)^{m/2} - (\sigma^2-d)^{m/2}_+]\prod_{j=1}^m |\Lambda \boldsymbol{r_j}|^{-1}.
\end{equation}
Since $\{\boldsymbol{r_j}\}_{j=1}^m$ is an orthonormal basis, we have
\begin{equation}
    \prod_{j=1}^m|\Lambda \boldsymbol{r_j}|^{-1} \leq \prod_{j=1}^m \lambda_{n+m+1-j}.
\end{equation}
Then it follows
\begin{equation}
    |U_{\sigma} \cap P| \leq |B^m|\frac{m}{2}(1-\sigma^2)\prod_{j=1}^m \lambda_{n+m+1-j} =: C_{\sigma}.
\end{equation}
Now we consider  
\begin{equation}\label{eq: codim2 ellipsoid}
    f_{\sigma} = \frac{1}{C_{\sigma}}1_{U_{\sigma}},
\end{equation} 
and it clearly satisfies $\text{supp}(f_{\sigma}) \subset \mathcal{W}$ and $f_{\sigma} \geq 0$. For any $m$-dimensional affine subspace $P$ in $\mathbb{R}^{n+m}$,
\begin{equation}
    \int_P f_{\sigma} = \frac{|U_{\sigma} \cap P|}{C_{\sigma}} \leq 1.
\end{equation}

On the other hand, the volume of the shell $U_{\sigma}$ is given by the difference of two $(n+m)$-dimensional ellipsoids, so
\begin{align}
    \int_{\mathcal{W}} f_{\sigma} &= \frac{|U_\sigma|}{C_{\sigma}} = \frac{1}{C_{\sigma}}|B^{n+m}|(1-\sigma^{n+m})\prod_{j=1}^{n+m}\lambda_j\notag\\
    &= \frac{2(1-\sigma^{n+m})}{1-\sigma^2}\frac{|B^{n+m}|}{m|B^m|} \prod_{j=1}^n\lambda_j.
\end{align}
Taking the limit as $\sigma \xrightarrow{} 1$, we obtain that
\begin{equation}
    \lim_{\sigma\to 1}\int_{\mathcal{W}} f_{\sigma} = \frac{(n+m)|B^{n+m}|}{m|B^m|} \prod_{j=1}^n\lambda_j = \frac{(n+m)|B^{n+m}|}{m|B^m||B^n|} |W^*|,
\end{equation}
where we used $|W^*| = |B^n|\prod_{j=1}^n\lambda_j$ for the ellipsoid. In particular, when $m = 2$, the coefficient simplifies to 1, and we have completed the proof of \eqref{f-sigma}.
\end{proof}

Next we will prove a lemma about the necessary condition discussed in Remark \ref{remark: necessary condition for equality}. We will use this lemma and the $\tilde{f}$ and $f_{\sigma}$ constructed above to verify \eqref{wulff-isop} for general $\mathcal{W} \in \mathcal{L}_{n,m}$ and $m = 1, 2$. 


\begin{lemma}\label{lemma: necessary condition}
    Suppose $\mathcal{W}_1$ and $f_{\sigma} \in\mathcal{F}^{\mathcal{W}_1}_{n,m}$ satisfies
    \begin{equation}
        \sup_{f \in \mathcal{F}^{\mathcal{W}_1}_{n,m}}\int_{\mathcal{W}_1}f = \lim_{\sigma\to 1}\int_{\mathcal{W}_1}f_{\sigma} = |W_1^*|,
    \end{equation}
    and let $\mathcal{W}_2$ be generated from $\mathcal{W}_1$ via gluing or cutting with respect to an $n$-dimensional affine subspace $P$ such that $\text{proj}_P\mathcal{W}_1$ is an area-minimizing projection for $\mathcal{W}_1$. Then, $\text{proj}_P\mathcal{W}_2$ must also be an area-minimizing projection for $\mathcal{W}_2$, and 
    \begin{equation}
        \sup_{f \in \mathcal{F}^{\mathcal{W}_2}_{n,m}}\int_{\mathcal{W}_2}f = \lim_{\sigma\to 1}\int_{\mathcal{W}_2}f_{\sigma}|_{\mathcal{W}_2} = |W_2^*|.
    \end{equation}
\end{lemma}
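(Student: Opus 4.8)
The plan is to reduce the lemma to two facts that are essentially already available: the trivial Fubini upper bound $\sup_{f\in\mathcal{F}^{\mathcal{W}}_{n,m}}\int_{\mathcal{W}}f\le|W^*|$ recorded in \eqref{trivial-bound}, and the observation that attaining this bound in the limit forces the $m$-dimensional ``fiber masses'' of the near-optimal functions to converge to $1$ almost everywhere on an area-minimizing projection. Once this is in place, both the gluing and the cutting case will follow by restricting the near-optimal functions $f_\sigma$ to $\mathcal{W}_2$ and reading off a chain of inequalities that is forced to collapse to equalities.

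Concretely, let $\pi=\text{proj}_P$ be the orthogonal projection onto $P$, so that its fibers $\pi^{-1}(x)$, $x\in P$, are $m$-dimensional affine subspaces, hence elements of $\text{Graff}_m(\mathbb{R}^{n+m})$ (projection areas being translation-invariant, whether $P$ passes through the origin is immaterial). First I would apply Fubini to $f_\sigma$: since $\text{supp}(f_\sigma)\subset\mathcal{W}_1$ and $\int_{\pi^{-1}(x)}f_\sigma\le1$ for every $x$, setting $\phi_\sigma(x):=\int_{\pi^{-1}(x)}f_\sigma\in[0,1]$ gives $\int_{\mathcal{W}_1}f_\sigma=\int_{\text{proj}_P\mathcal{W}_1}\phi_\sigma$. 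Since $\text{proj}_P\mathcal{W}_1$ is area-minimizing, the left side tends to $|W_1^*|=|\text{proj}_P\mathcal{W}_1|$, so $\int_{\text{proj}_P\mathcal{W}_1}(1-\phi_\sigma)\to0$, that is, $\phi_\sigma\to1$ in $L^1(\text{proj}_P\mathcal{W}_1)$.

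Next I would set $g_\sigma:=f_\sigma\,\mathbf{1}_{\mathcal{W}_2}=f_\sigma|_{\mathcal{W}_2}$. Because $g_\sigma\le f_\sigma$, it is still nonnegative, supported in $\mathcal{W}_2$, and satisfies $\int_Q g_\sigma\le1$ for every $Q\in\text{Graff}_m(\mathbb{R}^{n+m})$, so $g_\sigma\in\mathcal{F}^{\mathcal{W}_2}_{n,m}$. The feature common to both operations is that $\mathcal{W}_1\cap\mathcal{W}_2=\mathcal{W}_1\cap\pi^{-1}(S)$ with $S:=\text{proj}_P\mathcal{W}_2$: for cutting this is the definition, and for gluing it holds since $\mathcal{W}_1\subset\mathcal{W}_2$ and $S=\text{proj}_P\mathcal{W}_1$. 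Using $\text{supp}(f_\sigma)\subset\mathcal{W}_1$ this yields $g_\sigma=f_\sigma\,\mathbf{1}_{\pi^{-1}(S)}$, and Fubini together with the previous step gives $\int_{\mathcal{W}_2}g_\sigma=\int_S\phi_\sigma\to|S|=|\text{proj}_P\mathcal{W}_2|$ as $\sigma\to1$, using $S\subset\text{proj}_P\mathcal{W}_1$.

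To finish, the trivial bound \eqref{trivial-bound} applied to $\mathcal{W}_2$ gives $\sup_{f\in\mathcal{F}^{\mathcal{W}_2}_{n,m}}\int_{\mathcal{W}_2}f\le|W_2^*|\le|\text{proj}_P\mathcal{W}_2|$, the last inequality because $|W_2^*|$ is the minimum of projection areas, while the step above gives $\sup_{f\in\mathcal{F}^{\mathcal{W}_2}_{n,m}}\int_{\mathcal{W}_2}f\ge\lim_{\sigma\to1}\int_{\mathcal{W}_2}g_\sigma=|\text{proj}_P\mathcal{W}_2|$. Hence all these quantities coincide, which is precisely the assertion that $\text{proj}_P\mathcal{W}_2$ achieves $|W_2^*|$, so it is area-minimizing for $\mathcal{W}_2$, and that $\sup_{f\in\mathcal{F}^{\mathcal{W}_2}_{n,m}}\int_{\mathcal{W}_2}f=\lim_{\sigma\to1}\int_{\mathcal{W}_2}f_\sigma|_{\mathcal{W}_2}=|W_2^*|$. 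I do not expect a genuine obstacle here; the only points needing a little care are verifying that gluing and cutting both have the form $\mathcal{W}_1\cap\pi^{-1}(S)$, so that a single Fubini computation handles both, and the routine measurability of $x\mapsto\phi_\sigma(x)$. The conceptual heart of the argument, and the one step I would isolate as a standalone claim, is the $L^1$-convergence $\phi_\sigma\to1$, since that is the mechanism transporting optimality from $\mathcal{W}_1$ to $\mathcal{W}_2$.
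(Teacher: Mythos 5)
Your proof is correct and uses essentially the same mechanism as the paper: Fubini slicing along the area-minimizing projection, the trivial bound \eqref{trivial-bound}, and the observation that near-optimality forces the $m$-dimensional fiber masses to tend to $1$. The paper treats gluing separately with a quick set-theoretic comparison ($\mathcal{F}^{\mathcal{W}_1}_{n,m}\subset\mathcal{F}^{\mathcal{W}_2}_{n,m}$ plus an area sandwich) and applies the fiber-mass argument only to cutting, whereas you unify both cases through the identity $\mathcal{W}_1\cap\mathcal{W}_2=\mathcal{W}_1\cap\pi^{-1}(\text{proj}_P\mathcal{W}_2)$; your phrasing via $L^1$-convergence of $\phi_\sigma$ is also a bit cleaner than the paper's a.e.\ pointwise statement, which strictly speaking only holds along a subsequence, though this does not affect the conclusion in either version.
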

\begin{proof}
    Suppose $\mathcal{W}_2 \supset \mathcal{W}_1$ is obtained via a gluing operation. Then by Definition \ref{long-body-def}, $\mathcal{F}^{\mathcal{W}_1}_{n,m} \subset \mathcal{F}^{\mathcal{W}_2}_{n,m}$. Moreover, we also have 
    \begin{align}
        |W_2^*| \geq |W_1^*| = |\text{proj}_P\mathcal{W}_1|= |\text{proj}_P\mathcal{W}_2| \geq |W_2^*|.
    \end{align}
    Thus, we conclude that $|W_1^*| = |W_2^*|$, and $\text{proj}_P\mathcal{W}_2$ is an area-minimizing projection for $\mathcal{W}_2$. By assumption, we can thus find a sequence of $f_{\sigma} \in \mathcal{F}^{\mathcal{W}_1}_{n,m} \subset \mathcal{F}^{\mathcal{W}_2}_{n,m}$ such that
    \begin{equation}
        \lim_{\sigma \to 1}\int_{\mathcal{W}_2}f_{\sigma} = \lim_{\sigma \to 1}\int_{\mathcal{W}_1}f_{\sigma} = |W_1^*| = |W_2^*|,
    \end{equation}
    which finishes the proof for gluing operation.
    
    Now suppose $\mathcal{W}_2 \subset \mathcal{W}_1$ is obtained via a cutting operation. By Definition \ref{long-body-def}, $\text{proj}_P\mathcal{W}_1 \supset \text{proj}_P\mathcal{W}_2$. For any $x \in \text{proj}_P\mathcal{W}_1$, denote $Q_x = (\text{proj}_P)^{-1}x$ the $m$-dimensional affine subspace orthogonal to $\text{proj}_P\mathcal{W}_1$ and passing through $x$. By assumption, we can find a sequence $f_{\sigma} \in \mathcal{F}^{\mathcal{W}_1}_{n,m}$ such that
    \begin{equation}
        \lim_{\sigma \to 1}\int_{\mathcal{W}_1}f_{\sigma} = |W_1^*|.
    \end{equation}
    Use the fact that $\text{proj}_P\mathcal{W}_1$ is an area-minimizing projection, we must have for a.e. $x \in \text{proj}_P\mathcal{W}_1$,
    \begin{equation}
        \lim_{{\sigma \to 1}}\int_{Q_x \cap \mathcal{W}_1} f_{\sigma} = 1.
    \end{equation}
    On the other hand, for any $x \in \text{proj}_P\mathcal{W}_2$, by definition of cutting
    \begin{equation}
        \mathcal{W}_2 = (\text{proj}_P)^{-1}(\text{proj}_P\mathcal{W}_2) \cap \mathcal{W}_1 \supset (\text{proj}_P)^{-1}x \cap \mathcal{W}_1 = Q_x \cap \mathcal{W}_1
    \end{equation}
    Certainly $Q_x \cap \mathcal{W}_2 \subset Q_x \cap \mathcal{W}_1$, so $Q_x \cap \mathcal{W}_2 = Q_x \cap \mathcal{W}_1$ for any $x \in \text{proj}_P\mathcal{W}_2$.  Therefore, $f_{\sigma}|_{\mathcal{W}_2} \subset \mathcal{F}^{\mathcal{W}_2}_{n,m}$ satisfies
    \begin{equation}
         \lim_{{\sigma \to 1}}\int_{\mathcal{W}_2}f_{\sigma} =  \lim_{{\sigma \to 1}}\int_{\text{proj}_P\mathcal{W}_2}\int_{Q_x \cap \mathcal{W}_2} f_{\sigma} =  \lim_{{\sigma \to 1}}\int_{\text{proj}_P\mathcal{W}_2}\int_{Q_x \cap \mathcal{W}_1} f_{\sigma} =|\text{proj}_P\mathcal{W}_2|
    \end{equation}
     Combining with \eqref{trivial-bound}, we obtain
    \begin{equation}
        |W_2^*| \leq |\text{proj}_P\mathcal{W}_2| = \lim_{\sigma\to 1}\int_{\mathcal{W}_2}f_{\sigma}|_{\mathcal{W}_2} \leq \sup_{f \in \mathcal{F}^{\mathcal{W}_2}_{n,m}}\int_{\mathcal{W}_2}f_{\sigma} \leq |W_2^*|.
    \end{equation}
    Hence $\text{proj}_P\mathcal{W}_2$ must be area-minimizing projection, which finishes the proof for cutting operation.
\end{proof}

\begin{proposition}\label{long-body}
    Suppose $\mathcal{W} \in \mathcal{L}_{n,m}$ is a long convex body, then there exists a function $\tilde{f} \in \mathcal{F}_{n,m}$ when $m=1$, and a sequence of functions $f_\sigma \in \mathcal{F}_{n,m}$ when $m=2$, such that \eqref{tilde-f} and \eqref{f-sigma} hold, respectively. Hence
    \begin{equation}
        \sup_{f \in \mathcal{F}_{n,m}}\int_{\mathcal{W}}f = |W^*|.
    \end{equation}
\end{proposition}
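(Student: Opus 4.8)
The plan is to induct on the number of gluing and cutting operations needed to build $\mathcal{W}$ from an ellipsoid, using Proposition \ref{codim-1} and Proposition \ref{codim-2} as the base case and Lemma \ref{lemma: necessary condition} as the inductive step. By Definition \ref{long-body-def}, $\mathcal{L}_{n,m}$ is the smallest class of centrally symmetric convex bodies containing $\mathcal{E}_{n+m}$ and closed under the two operations, so every $\mathcal{W}\in\mathcal{L}_{n,m}$ is the last term of a finite chain $\mathcal{W}_0,\mathcal{W}_1,\dots,\mathcal{W}_k=\mathcal{W}$ with $\mathcal{W}_0\in\mathcal{E}_{n+m}$ and each $\mathcal{W}_{j+1}$ obtained from $\mathcal{W}_j$ by a single gluing or cutting with respect to an $n$-dimensional affine subspace $P_j$ with $|\text{proj}_{P_j}\mathcal{W}_j|=|W_j^*|$ (this area-minimizing requirement is exactly the condition imposed in Definition \ref{long-body-def}).

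\emph{Base case.} For the ellipsoid $\mathcal{W}_0$, Proposition \ref{codim-1} (if $m=1$) yields $\tilde f\in\mathcal{F}^{\mathcal{W}_0}_{n,1}$ with $\int_{\mathcal{W}_0}\tilde f=|W_0^*|$, and Proposition \ref{codim-2} (if $m=2$) yields $f_\sigma\in\mathcal{F}^{\mathcal{W}_0}_{n,2}$ with $\lim_{\sigma\to1}\int_{\mathcal{W}_0}f_\sigma=|W_0^*|$; in the former case regard $\tilde f$ as the constant sequence $f_\sigma\equiv\tilde f$. Since each $\int_{\mathcal{W}_0}f_\sigma$ is bounded by the supremum and the supremum is bounded by $|W_0^*|$ via \eqref{trivial-bound}, we get $\sup_{f\in\mathcal{F}^{\mathcal{W}_0}_{n,m}}\int_{\mathcal{W}_0}f=|W_0^*|$, so the hypothesis of Lemma \ref{lemma: necessary condition} holds at $\mathcal{W}_0$.

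\emph{Inductive step.} Suppose that for some $j$ we have $f_\sigma\in\mathcal{F}^{\mathcal{W}_j}_{n,m}$ with $\lim_{\sigma\to1}\int_{\mathcal{W}_j}f_\sigma=\sup_{f\in\mathcal{F}^{\mathcal{W}_j}_{n,m}}\int_{\mathcal{W}_j}f=|W_j^*|$. Because $\mathcal{W}_{j+1}$ is obtained from $\mathcal{W}_j$ by a gluing or cutting with respect to the area-minimizing projection $P_j$, Lemma \ref{lemma: necessary condition} applies with $(\mathcal{W}_1,\mathcal{W}_2)=(\mathcal{W}_j,\mathcal{W}_{j+1})$ and shows both that $\text{proj}_{P_j}\mathcal{W}_{j+1}$ is an area-minimizing projection for $\mathcal{W}_{j+1}$ and that $f_\sigma|_{\mathcal{W}_{j+1}}\in\mathcal{F}^{\mathcal{W}_{j+1}}_{n,m}$ satisfies $\lim_{\sigma\to1}\int_{\mathcal{W}_{j+1}}f_\sigma|_{\mathcal{W}_{j+1}}=|W_{j+1}^*|$; with \eqref{trivial-bound} this upgrades to $\sup_{f\in\mathcal{F}^{\mathcal{W}_{j+1}}_{n,m}}\int_{\mathcal{W}_{j+1}}f=|W_{j+1}^*|$, which is the hypothesis needed at the next stage. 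When $m=1$ and $f_\sigma\equiv\tilde f$, the restriction $\tilde f|_{\mathcal{W}_{j+1}}$ is again a single function realizing the supremum, so the stronger "function" form is preserved. Iterating from $j=0$ to $j=k-1$ produces, for $\mathcal{W}=\mathcal{W}_k$, a $\tilde f$ with $\int_{\mathcal{W}}\tilde f=|W^*|$ when $m=1$ and an $f_\sigma$ with $\lim_{\sigma\to1}\int_{\mathcal{W}}f_\sigma=|W^*|$ when $m=2$, i.e.\ \eqref{tilde-f} and \eqref{f-sigma}, and in particular $\sup_{f\in\mathcal{F}_{n,m}}\int_{\mathcal{W}}f=|W^*|$.

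The only real point of care is the bookkeeping in the induction: at every stage one needs the subspace $P_j$ used for the next operation to be area-minimizing for the current body $\mathcal{W}_j$, which is forced by Definition \ref{long-body-def} and is consistent only because Lemma \ref{lemma: necessary condition} propagates the area-minimizing property forward, so the chain never leaves the regime where the lemma applies. For $m=1$ one must also check that the object carried through the induction remains an honest element of $\mathcal{F}_{n,1}$ rather than merely a limiting sequence; this is immediate since restricting a function in $\mathcal{F}_{n,1}$ to a smaller convex body keeps its support in that body and can only decrease line integrals. Neither point is a genuine obstacle, so the proof is essentially a clean reduction to the ellipsoid cases already established in Propositions \ref{codim-1} and \ref{codim-2}.
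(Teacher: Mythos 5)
Your proposal is correct and takes essentially the same approach as the paper: reduce to the ellipsoid case via Propositions \ref{codim-1} and \ref{codim-2}, then propagate the supremum equality through the chain of gluing/cutting operations using Lemma \ref{lemma: necessary condition}. The only difference is that you spell out the finite induction explicitly and verify that the area-minimizing property is carried forward at each step, whereas the paper states this reduction more tersely ("directly follows from Lemma \ref{lemma: necessary condition}") and simply observes that the final $\tilde f$ or $f_\sigma$ is the ellipsoid one restricted to $\mathcal{W}$; the careful bookkeeping you provide is exactly what is implicit in that statement.
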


\begin{proof}
By the discussion after Definition \ref{long-body-def}, any $\mathcal{W} \in \mathcal{L}_{n,m}$ can be constructed through a finite number of gluing and cutting operations starting from an ellipsoid. It is also clear that Proposition \ref{long-body} holds for ellipsoids by Proposition \ref{codim-1} and \ref{codim-2}. Then for any long convex body, Proposition \ref{long-body} directly follows from Lemma \ref{lemma: necessary condition}. In particular, let $E$ be the ellipsoid that generated $\mathcal{W}$, then for $m=1$ we can choose $\tilde{f}$ to be the one in Proposition \ref{codim-1}, and for $m=2$ we can choose $f_{\sigma}$ to be the sequence in Proposition \ref{codim-2}, both restricted to $\mathcal{W}$.
\end{proof}


Next, we establish the sharpness and rigidity properties when equality holds for $\mathcal{W} \in \mathcal{L}_{n,m}$ and codimension $m=1, 2$.

\begin{proposition}\label{equality}
    Let codimension $m = 1, 2$ and $\mathcal{W} \in \mathcal{L}_{n,m}$. Then the equality in \eqref{wulff-isop} holds if and only if $\Sigma$ is homothetic to $W^*$.
\end{proposition}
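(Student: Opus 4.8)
The ``if'' direction is immediate: if $\Sigma=aW^{*}+b$ with $a>0$, then $\Sigma$ lies in an $n$-dimensional affine subspace parallel to the one containing $W^{*}$, and since a translation and a positive dilation do not change the directions of the outer conormals of $\partial\Sigma$ relative to $\Sigma$, one gets $P_{\Phi}(\Sigma)=a^{n-1}P_{\Phi}(W^{*})$ and $|\Sigma|=a^{n}|W^{*}|$, so the two sides of \eqref{wulff-isop} coincide. For the converse, assume equality in \eqref{wulff-isop}. First $\Sigma$ must be connected: otherwise, applying Theorem \ref{aniso-peri} componentwise and using the strict inequality \eqref{component-inequ} would make \eqref{wulff-isop} strict. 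So assume $\Sigma$ connected, let $u,T,A$ be as in Section 3, and set $c:=P_{\Phi}(\Sigma)/(n|\Sigma|)$. By Theorem \ref{aniso-peri}, \eqref{Wulff-const} and Proposition \ref{long-body}, equality in \eqref{wulff-isop} is equivalent to $c^{n}|\Sigma|=\sup_{\mathcal{F}_{n,m}}\int_{\mathcal{W}}f=|W^{*}|$.

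The plan is to use this to force every inequality in the chain \eqref{eq: inequality1} to be an equality for the extremal density $\tilde f$ of \eqref{tilde-f} when $m=1$, resp.\ to make each defect tend to $0$ along the maximizing sequence $f_{\sigma}$ of \eqref{f-sigma} when $m=2$. Writing the total defect $c^{n}|\Sigma|-\int_{\mathcal{W}}f$ as a sum of three nonnegative pieces --- the multiplicity defect $\int f(\xi)\,(\#(T|_{A})^{-1}(\xi)-1)\,d\xi$, the Jacobian defect $\int_{\Sigma}\int_{N_{x}\Sigma}f(\nabla^{\Sigma}u(x)+y)\,\big(c^{n}-(\det(D^{2}_{\Sigma}u(x)-\langle\textrm{I\!I}_{\Sigma}(x),y\rangle))_{+}\big)\,dy\,dx$, and the slicing defect $c^{n}\int_{\Sigma}(1-\int_{P_{x}}f)\,dx$ with $P_{x}:=\nabla^{\Sigma}u(x)+N_{x}\Sigma$ --- I would extract: from the slicing defect, $\int_{P_{x}}f=1$ for a.e.\ $x$; from the multiplicity defect together with Lemma \ref{surjective}, $T|_{A}$ is a.e.\ single-valued over $\{f>0\}$, so a.e.\ preimage there is the minimum point of that lemma, at which $D^{2}_{\Sigma}u(x)-\langle\textrm{I\!I}_{\Sigma}(x),y\rangle\succeq0$; and from the Jacobian defect, combined with $\mathrm{tr}(D^{2}_{\Sigma}u(x)-\langle\textrm{I\!I}_{\Sigma}(x),y\rangle)=\Delta_{\Sigma}u(x)=nc$ by minimality so that the arithmetic--geometric mean bound of Lemma \ref{det-T} is saturated, $D^{2}_{\Sigma}u(x)-\langle\textrm{I\!I}_{\Sigma}(x),y\rangle=c\,\mathrm{Id}$ for a.e.\ $(x,y)$ with $\nabla^{\Sigma}u(x)+y\in\{f>0\}$.

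The key step is to upgrade this to $\textrm{I\!I}_{\Sigma}\equiv0$. Fix $x$ with $\int_{P_{x}}f=1$. When $m=1$, $\{\tilde f>0\}$ is a full‑dimensional piece of the generating ellipsoid $E$, so the slice $S_{x}:=\{y\in N_{x}\Sigma:\nabla^{\Sigma}u(x)+y\in\{\tilde f>0\}\}$ is a nondegenerate interval in the line $N_{x}\Sigma$; since $D^{2}_{\Sigma}u(x)-\langle\textrm{I\!I}_{\Sigma}(x),y\rangle=c\,\mathrm{Id}$ on $S_{x}$ and $y\mapsto\langle\textrm{I\!I}_{\Sigma}(x),y\rangle$ is linear, $\langle\textrm{I\!I}_{\Sigma}(x),\cdot\rangle\equiv0$ on $N_{x}\Sigma$, i.e.\ $\textrm{I\!I}_{\Sigma}(x)=0$. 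When $m=2$, only the maximizing sequence is available, so instead I would pass to the weak‑$*$ limit $\nu$ of the measures $f_{\sigma}(\nabla^{\Sigma}u(x)+y)\,dy\,dx$ on $N\Sigma$; this $\nu$ is carried by the boundary ellipses $\{(x,y):y\in\partial(N_{x}\Sigma\cap(E-\nabla^{\Sigma}u(x)))\}$, and since $(x,y)\mapsto c^{n}-(\det(D^{2}_{\Sigma}u(x)-\langle\textrm{I\!I}_{\Sigma}(x),y\rangle))_{+}$ is continuous and nonnegative while the Jacobian defect tends to $0$, it vanishes $\nu$‑a.e.; together with the positivity from the minimum‑point construction this again gives $D^{2}_{\Sigma}u(x)-\langle\textrm{I\!I}_{\Sigma}(x),y\rangle=c\,\mathrm{Id}$ for a.e.\ $y$ on the boundary ellipse, whose affine hull is all of $N_{x}\Sigma$, so $\textrm{I\!I}_{\Sigma}(x)=0$. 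By continuity $\textrm{I\!I}_{\Sigma}\equiv0$; hence the connected $\Sigma$ is totally geodesic, so it is an $n$‑dimensional domain in an affine $n$‑plane, and after a translation I call its linear part $P_{0}$, so that $\Sigma\subset P_{0}$ and $N_{x}\Sigma=P_{0}^{\perp}$ for all $x$.

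To finish, since $\Sigma\subset P_{0}$ the conormal of $\partial\Sigma$ in $\Sigma$ lies in $P_{0}$, so $P_{\Phi}(\Sigma)=P_{\Phi|_{P_{0}}}(\Sigma)$, and by Lemma \ref{proj-wulff} $\Phi|_{P_{0}}$ is the support function of $\mathrm{proj}_{P_{0}}\mathcal{W}$. Identifying $P_{0}\cong\mathbb{R}^{n}$ and applying the classical Wulff inequality (Theorem \ref{wulff-inequ}) to the centrally symmetric body $\mathrm{proj}_{P_{0}}\mathcal{W}$, using $P_{\Phi|_{P_{0}}}(\mathrm{proj}_{P_{0}}\mathcal{W})=n|\mathrm{proj}_{P_{0}}\mathcal{W}|$ (the computation \eqref{Wulff-const}) and the defining bound $|\mathrm{proj}_{P_{0}}\mathcal{W}|\ge|W^{*}|$, I get $P_{\Phi}(\Sigma)\ge n|\mathrm{proj}_{P_{0}}\mathcal{W}|^{1/n}|\Sigma|^{(n-1)/n}\ge n|W^{*}|^{1/n}|\Sigma|^{(n-1)/n}$; since we are in the equality case of \eqref{wulff-isop}, both are equalities, so $|\mathrm{proj}_{P_{0}}\mathcal{W}|=|W^{*}|$ and the equality case of Theorem \ref{wulff-inequ} forces $\Sigma$ to be homothetic to $\mathrm{proj}_{P_{0}}\mathcal{W}$, which is one of the $W^{*}$. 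The step I expect to be the main obstacle is the $m=2$ case, where only a maximizing sequence is available and the pointwise equality conditions --- especially the positive semidefiniteness of $D^{2}_{\Sigma}u-\langle\textrm{I\!I}_{\Sigma},\cdot\rangle$ on the support of the limit measure --- must be recovered through the weak‑$*$ passage; a secondary technical point is the bookkeeping when the generating ellipsoid has repeated axis lengths or the long body arises from several gluing/cutting steps, so that $N_{x}\Sigma$ is only constrained to lie in the span of the longest axes rather than to be a fixed subspace (which still suffices, since the relevant slices continue to affinely span $N_{x}\Sigma$), and, via Lemma \ref{lemma: necessary condition} and Remark \ref{remark: projection=restriction wulff}, to see that the resulting $P_{0}$ is indeed an area‑minimizing projection subspace.
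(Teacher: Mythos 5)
Your proposal follows essentially the same strategy as the paper's: reduce equality in \eqref{wulff-isop} to saturation of \eqref{opt-inequ}, use connectedness via \eqref{component-inequ}, invoke the explicit densities $\tilde f$ and $f_\sigma$ from Propositions~\ref{codim-1} and~\ref{codim-2} (restricted to $\mathcal{W}$ per Proposition~\ref{long-body}), extract $D^2_\Sigma u - \langle\textrm{I\!I}_\Sigma,y\rangle = c\,\mathrm{Id}$ from AM--GM saturation, use linearity of $y\mapsto\langle\textrm{I\!I}_\Sigma(x),y\rangle$ over a slice spanning $N_x\Sigma$ (interval when $m=1$, boundary ellipse when $m=2$) to force $\textrm{I\!I}_\Sigma\equiv 0$, and finish by reducing to the codimension-zero rigidity of Theorem~\ref{wulff-inequ} together with the pinching $|W^*|\le|\mathrm{proj}_{P_0}\mathcal{W}|\le|W^*|$. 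Your three-defect decomposition is a cleaner bookkeeping device but is the same chain as \eqref{eq: inequality1}.

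The one place you genuinely diverge is the $m=2$ case, and there your route is more delicate than the paper's. You pass to a weak-$*$ limit $\nu$ of $f_\sigma(\nabla^\Sigma u(x)+y)\,dy\,dx$ and want to conclude both vanishing of the Jacobian defect and positive semi-definiteness $\nu$-a.e.; you flag (correctly) that recovering semi-definiteness through the weak-$*$ passage is the crux, and you would derive it from single-valuedness of $T|_A$ via the vanishing multiplicity defect. The paper instead proves Claim~\ref{ellip-partial}, a pointwise statement holding for \emph{every} $(x,y)$ with $T(x,y)\in S$, by a direct continuity-and-$\varepsilon$-neighborhood contradiction: if the matrix fails to equal $c\,\mathrm{Id}$ (either because $\det<c^n$ on a neighborhood in $A$, or because $(x_0,y_0)\notin A$ and a whole neighborhood misses $A$), the lower bound $\int_U f_\sigma\ge\delta>0$ coming from concentration of $f_\sigma$ near $S$ produces a definite strict gap in \eqref{eq: inequality1}. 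This handles $A$ and $A^c$ in one stroke without appealing to multiplicity or to a limit measure, so the semi-definiteness subtlety you anticipate never arises. Your approach should work but would require justifying measurability of the selection $\xi\mapsto(\bar x,y)$ and the interchange of limits; the paper's argument is more elementary and gives a stronger (everywhere) conclusion. The final reduction to the classical Wulff rigidity and the observation that $|\mathrm{proj}_{P_0}\mathcal{W}|=|W^*|$ are both exactly as in the paper.
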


\begin{proof}
 It is clear from Remark \ref{remark: projection=restriction wulff} that if $\Sigma$ is homothetic to some $W^*$ then equality holds, so we only need to prove the converse.
 
First of all, if equality \eqref{wulff-isop} holds for some $\Sigma$, by \eqref{component-inequ} $\Sigma$ must be connected. Moreover, from Proposition \ref{long-body} we know $\sup_{\mathcal{F}_{n,m}}  \int_{\mathcal{W}} f=|W^*|$, and $\Sigma$ achieves the equality of \eqref{wulff-isop} if and only if $\Sigma$ achieves the equality of \eqref{opt-inequ}.

When $m=1$, let $\tilde{f}\in \mathcal{F}_{n, m}$ be the density function found in Proposition \ref{long-body} such that 
\begin{equation}
  \int_{\mathcal{W}}  \tilde{f}= \sup_{\mathcal{F}_{n,m}}  \int_{\mathcal{W}} f=|W^*|.
\end{equation}
For such $\tilde{f}$, the equalities in \eqref{eq: inequality1} must hold, meaning that
\begin{align}
    |\det DT(x, y)| = \left(\frac{P_{\Phi}(\Sigma)}{n|\Sigma|}\right)^n ~ \text{for}~ (x, y) \in A ~\text{a.e.}.
\end{align}
Moreover, we have $|\pi (A)| = |\Sigma|$ where $\pi(A) = \{x: |\{y:(x, y) \in A\}| > 0\}$ is the projection onto $\Sigma$.
Thus, the arithmetic-geometric mean inequality in Lemma \ref{det-T} must be equality for a.e. points in $A$, which only happens when $D_{\Sigma}^2u(x) - \langle \textrm{I\!I}_{\Sigma}(x), y \rangle = \frac{P_{\Phi}(\Sigma)}{n|\Sigma|}\textrm{Id}_n$ is a constant multiple of identity matrix. Thus $\textrm{I\!I}_{\Sigma}(x) = 0$ a.e. in $\pi(A)$, equivalent to a.e. in $\Sigma$. Since $\Sigma$ is smooth, it is in some codimension $m$ affine subspace $P$. By codimension zero Wulff inequality in Theorem \ref{wulff-inequ}, $\Sigma$ is homothetic to the Wulff shape corresponding to the restricted weight $\Phi|_P$. Then by Lemma \ref{proj-wulff}, $\Sigma$ must be homothetic to $\text{proj}_P\mathcal{W}$. By assumption $\Sigma$ achieves equality, so by Remark \ref{remark: projection=restriction wulff} we have
\begin{equation}
    \frac{P_{\Phi}(\Sigma)}{|\Sigma|^{\frac{n-1}{n}}} = \frac{P_{\Phi}(\text{proj}_P\mathcal{W})}{|\text{proj}_P\mathcal{W}|^{\frac{n-1}{n}}} = n|\text{proj}_P\mathcal{W}|^{\frac{1}{n}} \geq n|W^*|^{\frac{1}{n}}.
\end{equation}
On the other hand, by \eqref{trivial-bound} and the equality of \eqref{opt-inequ}, we also have 
\begin{align}
    \frac{P_{\Phi}(\Sigma)}{|\Sigma|^{\frac{n-1}{n}}} \leq n|W^*|^{\frac{1}{n}}.
\end{align}
Therefore, $|\text{proj}_P\mathcal{W}| = |W^*|$ and $\Sigma$ is homothetic to some area-minimizing projection $W^*$ in the codimension one case.

When $m=2$, let the sequence $f_\sigma\in \mathcal{F}_{n,m}$ be the corresponding approximating density functions defined in Proposition \ref{long-body}, these functions satisfy
\begin{align}\label{lim-sup}
    \lim_{\sigma\to 1}\int_{\mathcal{W}} f_\sigma = \sup_{\mathcal{F}_{n,m}}\int_{\mathcal{W}} f = |W^*|.
\end{align}
Denote $S = \cap_{\sigma} \text{supp}(f_{\sigma})$. If $\mathcal{W}$ is an ellipsoid we simply have $S = \partial \mathcal{W}$, and if $\mathcal{W}$ is a general long convex body, $S=\mathcal{W} \cap \partial E^{n+2}$, where $E^{n+2}$ is the original ellipsoid that generates $\mathcal{W}$.  As shown in \cite[Section 3]{Brendle2021}, we claim 
\begin{claim}\label{ellip-partial}
    For all $x\in \Sigma$ and all $y\in N_x\Sigma$ satisfying $T(x,y)\in S$, the equality $D_\Sigma^2 u(x)-\langle \textrm{I\!I}_{\Sigma}(x), y \rangle = \frac{P_{\Phi}(\Sigma)}{n|\Sigma|}\textrm{Id}_n$ holds.
\end{claim}

\begin{proof}
Suppose that claim fails at some $x_0\in \Sigma$ and $y_0\in N_{x_0}\Sigma$ satisfying   $T(x_0, y_0)\in S$ and $(x_0, y_0) \in A$. Then we have $D_\Sigma^2 u(x_0)-\langle \textrm{I\!I}_{\Sigma}(x_0), y_0 \rangle \neq \frac{P_{\Phi}(\Sigma)}{n|\Sigma|}\textrm{Id}_n$. The arithmetic-geometric inequality and \eqref{PDE neumann} imply
    \begin{equation}
        \det(D^{2}_{\Sigma} u(x_0)-\langle \textrm{I\!I}_{\Sigma}(x_0), y_0 \rangle) <\left(\frac{P_\Phi(\Sigma)}{n|\Sigma|}\right)^n.
    \end{equation}
    By continuity, there exists $\varepsilon\in(0,1)$, and a neighborhood $U$ of $(x_0,y_0)$ such that for all $(x, y)\in U  \cap A$, the following holds
    \begin{align}
        \det(D^{2}_{\Sigma} u(x)-\langle \textrm{I\!I}_{\Sigma}(x), y \rangle) \leq(1-\varepsilon)\left(\frac{P_\Phi(\Sigma)}{n|\Sigma|}\right)^n.
    \end{align}
    Using Lemma \ref{det-T}, we deduce that on $A$,
    \begin{align}\label{eq: DT < (1-epsilon)C}
        0 \leq \det DT(x, y) \leq (1-\varepsilon1_{U}(x, y))\left(\frac{P_\Phi(\Sigma)}{n|\Sigma|}\right)^n.
    \end{align}
    Certainly in the case where $(x_0, y_0)$ not in $A$, there exists a neighborhood $U$ of $(x_0, y_0)$ disjoint from $A$, and \eqref{eq: DT < (1-epsilon)C} still holds. The inequality \eqref{eq: inequality1} becomes
    \begin{align}\label{sequence-inequ-1}
        \int_{\mathcal{W}} f_\sigma(\xi) d\xi &\leq \int_{\Sigma}\int_{N_x\Sigma}f_\sigma(T(x, y))|\det DT(x, y)|1_{A}(x, y)dy dx\notag\\
        &\leq\left(\frac{P_\Phi(\Sigma)}{n|\Sigma|}\right)^n \int_{\Sigma}\int_{N_x\Sigma}f_\sigma(\nabla^{\Sigma}u(x)+y) dy dx\notag\\
        &-\varepsilon\left(\frac{P_\Phi(\Sigma)}{n|\Sigma|}\right)^n \int_U f_\sigma(\nabla^{\Sigma}u(x)+y).
    \end{align}

    For all $\sigma$ sufficiently close to 1, the integral $\int_U f_{\sigma}(\nabla^{\Sigma}u(x)+y)$ is bounded below by some positive constant independent on $\sigma$. Indeed, equality in \eqref{eq: inequality1} holds when taking the limit on both sides with respect to $f_{\sigma}$, which forces $\int_{N_x\Sigma}f(\nabla^{\Sigma}u(x)+y)dy$ to converge to 1 for almost every $x \in \Sigma$. Thus, the integral is bounded below by the fact that the support of $f_{\sigma}$ concentrates around $S$ by construction. Now taking the limit as $\sigma\to 1$ on both side of \eqref{sequence-inequ-1}, we obtain
    \begin{equation}
        \sup_{\mathcal{F}_{n,m}}\int_{\mathcal{W}} f < \left(\frac{P_\Phi(\Sigma)}{n|\Sigma|^{\frac{n-1}{n}}}\right)^n = |W^*|,
    \end{equation}
     where the last equality follows from Remark \ref{remark: projection=restriction wulff} and the assumption that $\Sigma$ achieves equality of \eqref{wulff-isop}. This leads to a contradiction with the equality of \eqref{opt-inequ}.
\end{proof}

From Claim \ref{ellip-partial}, we obtain
\begin{align}
    D_\Sigma^2 u(x)-\langle \textrm{I\!I}_{\Sigma}(x), y \rangle = \frac{P_{\Phi}(\Sigma)}{n|\Sigma|}\textrm{Id}_n
\end{align}
for all $(x, y) \in N\Sigma$ satisfying $T(x,y)\in S$. Note that the intersection $T(N_x\Sigma)\cap S$ is a $1$-dimensional ellipse in the $2$-dimensional plane $T(N_x\Sigma)$. Indeed this is clear when $\mathcal{W}$ is an ellipsoid; and when $\mathcal{W}$ is some other long convex body, any $x$ such that $T(N_x\Sigma) \cap S$ is not a full ellipse would result in $\int_{N_x\Sigma}f(\nabla^{\Sigma}u(x)+y)dy$ being  strictly smaller than $1$. In this case, equality cannot be achieved when taking limit on both sides of \eqref{eq: inequality1}, so such $x$ forms at most a measure-zero set. Thus, we conclude from above that $ \textrm{I\!I}_{\Sigma}(x)\perp N_x\Sigma$, which implies $ \textrm{I\!I}_{\Sigma}(x) = 0$ and $D_\Sigma^2 u(x) = \frac{P_{\Phi}(\Sigma)}{n|\Sigma|}\textrm{Id}_n$ for all $x\in \Sigma$. Arguing as codimension $m = 1$ case, we conclude that $\Sigma$ is homothetic to some $W^*$. This completes the proof of Proposition \ref{equality}.

\end{proof}

\begin{proof}[Proof of Theorem \ref{thm: sharp cases}]
    Theorem \ref{thm: sharp cases} directly follows from Theorem \ref{aniso-peri}, the fact that $n|W^*| = P_{\Phi}(W^*)$, Proposition \ref{long-body} and Proposition \ref{equality}.
\end{proof}

\section{Proof of Theorem \ref{thm: nonsharp codim 1}}
In  this section, we prove
Theorem \ref{thm: nonsharp codim 1} and we use the following lemma about John's ellipsoid.
\begin{lemma}[John's ellipsoid \cite{Schneider2014}]\label{John} If $K \subset \mathbb{R}^d$ is a centrally symmetric convex body with interior points, then there exists an ellipsoid $E$ such that 
    \begin{equation}
        E \subset K \subset \sqrt{d}E.
    \end{equation}
\end{lemma}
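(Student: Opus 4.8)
The statement is the classical John ellipsoid theorem for centrally symmetric bodies, so I would follow the standard extremal-ellipsoid argument, keeping it self-contained. The plan is to let $E$ be an ellipsoid of maximal volume among all ellipsoids centered at the origin that are contained in $K$, and to prove that this particular $E$ already satisfies $E \subseteq K \subseteq \sqrt d\, E$; the left inclusion is immediate, and only the right one requires work. Existence of $E$ I would obtain by compactness: writing origin-centered ellipsoids as $E_A = \{x : \langle Ax, x \rangle \le 1\}$ with $A$ positive definite symmetric, boundedness of $K$ forces $\lambda_{\min}(A)$ to be bounded below for every inscribed $E_A$, so after restricting to $\det A$ no larger than that of one fixed inscribed ellipsoid (which exists since $K$ has interior points) the parameter set is compact and $\mathrm{vol}(E_A) = |B^d|(\det A)^{-1/2}$ attains its maximum. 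Applying a linear automorphism I may then assume $E = B^d$, noting this preserves central symmetry of $K$ about the origin, so $B^d \subseteq K$ is still a maximal-volume origin-centered inscribed ellipsoid.

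For the outer bound I would argue by contradiction. If $K \not\subseteq \sqrt d\, B^d$, choose $p \in K$ with $|p| = h > \sqrt d$; central symmetry gives $-p \in K$, hence $C := \mathrm{conv}(B^d \cup \{\pm p\}) \subseteq K$. After a rotation take $p = h e_1$, so that $C$ is a body of revolution about the $e_1$-axis whose cross-section at height $x_1$ is a $(d-1)$-ball of radius $g(x_1)$, with $g(x_1) = \sqrt{1 - x_1^2}$ for $|x_1| \le 1/h$ and $g(x_1) = (h - |x_1|)/\sqrt{h^2 - 1}$ for $1/h \le |x_1| \le h$, the second branch being the cone from $\pm p$ tangent to the sphere and the switch occurring at the tangency height $|x_1| = 1/h$. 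I would then search for a larger inscribed origin-centered ellipsoid inside $C$ within the one-parameter family $E_{a,b} = \{x_1^2/a^2 + (x_2^2 + \cdots + x_d^2)/b^2 \le 1\}$, $1 \le a < h$, with $b = b(a) := \sqrt{(h^2 - a^2)/(h^2 - 1)}$ chosen so that the generating ellipse is tangent to the cone line.

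The key step is to verify $E_{a,b} \subseteq C$. Since both sets are invariant under rotations fixing the $e_1$-axis, this reduces to the two-dimensional axial section, where it amounts to $b(a)^2(1 - x_1^2/a^2) \le g(x_1)^2$ for all relevant $x_1$; on the cone branch this rearranges, after clearing denominators, to $(hx_1/a - a)^2 \ge 0$, and on the spherical branch, for $|x_1| \le 1/h$, it rearranges to $x_1^2 \le a^2/h^2$, which holds there since $a \ge 1$. Granting the containment, $\mathrm{vol}(E_{a,b}) = |B^d|\, F(a)$ with $F(a) = a\, b(a)^{d-1}$, and a direct computation gives $F(1) = 1$ and $(\log F)'(1) = 1 - (d-1)/(h^2-1)$, which is strictly positive exactly because $h^2 > d$; hence for $a$ slightly above $1$ one gets $\mathrm{vol}(E_{a,b}) > |B^d| = \mathrm{vol}(E)$, contradicting maximality of $E$. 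This forces $K \subseteq \sqrt d\, B^d = \sqrt d\, E$ and finishes the proof.

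The main obstacle is precisely this containment verification: one must ensure the candidate ellipsoid clears both the spherical cap and the tangent cone of $C$ simultaneously, which is why the tangency normalization $b = b(a)$ is essential — it is exactly what collapses the two containment conditions into the elementary inequalities above. The existence argument and the logarithmic-derivative computation are routine bookkeeping by comparison.
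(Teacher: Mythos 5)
Your proof is correct, but note that the paper does not prove this lemma at all: it is quoted verbatim from Schneider's monograph as a classical fact, so there is no in-paper argument to compare against. What you have written is the standard extremal (maximal-volume inscribed) ellipsoid proof, and the details check out. In particular, the compactness argument for existence is sound (boundedness of $K$ forces $\lambda_{\min}(A)\ge R^{-2}$ for every inscribed $E_A$, and capping $\det A$ makes the admissible set of matrices compact, with $E_A\subseteq K$ a closed condition since $K$ is compact); restricting to origin-centered ellipsoids is legitimate because the statement only asks for \emph{some} $E$, and linear normalization preserves the central symmetry of $K$. I verified the two containment inequalities: on the cone branch the difference of the two sides, after clearing denominators, is exactly $(hx_1-a^2)^2\ge 0$, and on the spherical branch it is $(a^2-1)(a^2-h^2x_1^2)$, which is nonnegative precisely on $|x_1|\le 1/h$ when $a\ge 1$ — so the tangency normalization $b=b(a)$ does what you claim. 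The logarithmic derivative $(\log F)'(1)=1-(d-1)/(h^2-1)$ is also correct and is positive exactly when $h^2>d$, which closes the contradiction. The only cosmetic remark is that your $E$ is the maximal origin-centered inscribed ellipsoid rather than the John ellipsoid per se, but for a centrally symmetric body these coincide, and in any case the lemma as stated does not require it.
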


\begin{proof}[Proof of Theorem \ref{thm: nonsharp codim 1}]
Since $\mathcal{W}$ is centrally symmetric, by Lemma \ref{John} there is John's ellipsoid $E \subset \mathbb{R}^{n+m}$ such that
\begin{equation}
    E \subset \mathcal{W} \subset \sqrt{n+m}E.
\end{equation}
Denote $E^*$ as the projection that achieves minimal projection area for $E$, then
\begin{equation}\label{area comparison}
    |E^*| \leq |W^*| \leq (n+m)^{\frac{n}{2}}|E^*|.
\end{equation}
Let $\mathcal{F}_{n,m}^E$ and $\mathcal{F}_{n, m}^{\mathcal{W}}$ be the corresponding sets of functions from \eqref{F_n,m space} for $E$ and $\mathcal{W}$, respectively. We note that $E \subset \mathcal{W}$ implies $\mathcal{F}_{n,m}^E \subset \mathcal{F}_{n, m}^{\mathcal{W}}$.

Define $\tilde{c}_{n,m}$ to be
\begin{equation}
    \tilde{c}_{n,m}=
    \begin{cases}
        1 & m = 1,2 ,\\
        \left(\frac{(n+m)|B^{n+m}|}{m|B^m||B^n|}\right)^{\frac{1}{n}} & m \geq 3.
    \end{cases}
\end{equation}
By Proposition \ref{codim-1} and Proposition \ref{codim-2} we have
\begin{equation}
    \sup_{f\in\mathcal{F}_{n, m}^E}\int_{\mathcal{W}}f \geq \tilde{c}_{n,m}^n|E^*|.
\end{equation}
Therefore
\begin{equation}\label{lower bound f}
    \sup_{f\in\mathcal{F}_{n, m}^{\mathcal{W}}}\int_{\mathcal{W}}f  \geq \sup_{f\in\mathcal{F}_{n, m}^E}\int_{\mathcal{W}}f \geq \tilde{c}_{n,m}^n|E^*| \geq (\frac{1}{\sqrt{n+m}}\tilde{c}_{n,m})^n|W^*| = c_{n,m}^n|W^*|,
\end{equation}
where the third inequality follows from \eqref{area comparison}. Then, by Theorem \ref{aniso-peri} and the fact that $n|W^*| = P_{\Phi}(W^*)$ from \eqref{Wulff-const}, we conclude that
\begin{equation}
    \frac{P_{\Phi}({\Sigma})}{|\Sigma|^{\frac{n-1}{n}}}\geq n(\sup_{\mathcal{F}^{\mathcal{W}}_{n,m}}\int_{\mathcal{W}} f)^{\frac{1}{n}} \geq c_{n, m} \cdot n|W^*|^{\frac{1}{n}} = c_{n, m}\frac{P_{\Phi}({W^*})}{|W^*|^{\frac{n-1}{n}}}.
\end{equation}

\end{proof}

\vspace{5mm}
\bibliography{ref}

\bibliographystyle{alpha}

\end{document}